\def\MatrixFont{\bf}
\def\VectorFont{\bf}
\newcommand{\bm}[1]{\mbox{\boldmath{$#1$}}}
\newcommand{\mA}{{\MatrixFont A}}
\newcommand{\mB}{{\MatrixFont B}}
\newcommand{\mC}{{\MatrixFont C}}
\newcommand{\mD}{{\MatrixFont D}}
\newcommand{\mI}{{\MatrixFont I}}
\newcommand{\mS}{{\MatrixFont S}}
\newcommand{\mW}{{\MatrixFont W}}
\newcommand{\va}{{\VectorFont a}}
\newcommand{\vp}{{\VectorFont p}}
\newcommand{\vs}{{\VectorFont s}}
\newcommand{\vw}{{\VectorFont w}}
\newcommand{\vx}{{\VectorFont x}}
\newcommand{\vy}{{\VectorFont y}}
\newcommand{\vz}{{\VectorFont z}}
\newcommand{\vxi}{{\boldsymbol{\xi}}}
\newcommand{\vlambda}{{\boldsymbol{\lambda}}}
\newcommand{\vmu}{{\boldsymbol{\mu}}}
\newcommand{\vzeta}{{\boldsymbol{\zeta}}}
\newcommand{\cmark}{\ding{51}}%
\newcommand{\xmark}{\ding{55}}
\newtheorem{theorem}{Theorem}
\newtheorem{lemma}{Lemma}
\newtheorem{remark}{Remark}
\newtheorem{assumption}{Assumption}
\newtheorem{corollary}{Corollary}
\newcommand\Ec{\ensuremath{\mathcal{E}}}
\newcommand\Gc{\ensuremath{\mathcal{G}}}
\newcommand\Nc{\ensuremath{\mathcal{N}}}
\newcommand\Ab{\ensuremath{{\bf A}}}
\newcommand\bb{\ensuremath{{\bm b}}}
\newcommand\Ub{\ensuremath{{\bf U}}}
\newcommand\Wb{\ensuremath{{\bf W}}}
\newcommand\zerob{\ensuremath{{\bm 0}}}
\newcommand\oneb{\ensuremath{{\bm 1}}}
\newcommand\xib{\ensuremath{{\bm \xi}}}
\newcommand\Psib{\ensuremath{{\bf \Psi}}}
\newcommand\E{\ensuremath{{\mathbb{E}}}}
\begin{document}
%
\title{Distributed Stochastic Consensus Optimization with Momentum for Nonconvex
 Nonsmooth Problems}
%
%
%

\author{Zhiguo~Wang,
        Jiawei~Zhang,
        Tsung-Hui~Chang, Jian Li and Zhi-Quan~Luo
\thanks{ Zhiguo Wang is with College of Mathematics, Sichuan
University, Chengdu, Sichuan 610064, China (e-mail: wangzhiguo@scu.edu.cn). This work was done in the Chinese University of Hong Kong, Shenzhen.}
\thanks{ Jiawei Zhang, Tsung-Hui Chang, and Zhi-Quan (Tom) Luo are with the Chinese University of Hong Kong, Shenzhen 518172, China
and also with Shenzhen Research Institute of Big Data , Shenzhen, Guangdong Province 518172, China (e-mail: jiaweizhang2@link.cuhk.edu.cn;
tsunghui.chang@ieee.org; luozq@cuhk.edu.cn). Corresponding
author: Zhi-Quan (Tom) Luo.}
\thanks{ Jian Li is with Department of Electrical and Computer Engineering, University of Florida (e-mail: li@dsp.ufl.edu).}
}

\maketitle

\begin{abstract}
While many distributed optimization algorithms have been proposed for solving smooth or convex problems over the networks, few of them can handle non-convex and non-smooth problems.
Based on a proximal primal-dual approach, this paper presents a new (stochastic) distributed algorithm with Nesterov momentum for accelerated optimization of non-convex and non-smooth problems. Theoretically, we show that the proposed algorithm can achieve an $\epsilon$-stationary solution under a constant step size with $\mathcal{O}(1/\epsilon^2)$ computation complexity  and $\mathcal{O}(1/\epsilon)$ communication complexity.
When compared to the existing gradient tracking based methods, the proposed algorithm has the same order of computation complexity but lower order of communication complexity.  To the best of our knowledge, the presented result is the  first stochastic algorithm with the $\mathcal{O}(1/\epsilon)$ communication complexity for non-convex and non-smooth problems.
Numerical experiments for a distributed non-convex regression problem and a deep neural network based classification problem are presented to illustrate the effectiveness of the proposed algorithms.
\end{abstract}

\begin{IEEEkeywords}
Distributed optimization, stochastic optimization, momentum, non-convex and non-smooth optimization.
\end{IEEEkeywords}

%
\IEEEpeerreviewmaketitle

\vspace{-0.3cm}
\section{Introduction}
\vspace{-0.0cm}
\label{sec:intro}
Recently, motivated by large-scale machine learning \cite{boyd2011distributed} and mobile edge computing \cite{mach2017mobile}, many
signal processing applications involve handling very large datasets \cite{vlaski2019distributed} that are processed over networks with distributed memories and processors.
Such signal processing and machine learning problems are usually formulated as a multi-agent distributed optimization problem \cite{chang2015multi}. In particular, many of the applications can be formulated as the following finite sum problem
\begin{align}
\label{finite sum prob}\min_{x} ~\sum_{i=1}^N\Big(f_i(x)+r_i(x)\Big),
\end{align}
where $N$ is the number of agents, $x\in \mathbb{R}^n$ contains the model parameters to be learned,
$f_i(x): \mathbb{R}^n\rightarrow\mathbb{R}$ is a closed and smooth (possibly nonconvex) loss function,
and $r_i(x)$ is a convex and possibly non-smooth regularization term.
Depending on how the data are acquired, there are two scenarios for problem \eqref{finite sum prob} \cite{chang2020distributed}.
\begin{itemize}
	
\item
Offline/Batch learning: the agents are assumed to have the complete local dataset. Specifically, the local cost functions can be written as
\begin{align}
f_i(x)=\frac{1}{m}\sum_{j=1}^{m}f_i^j(x),~ i=1,\ldots,N,
\end{align}
where
$f_i^j(x)$ is the cost for the $j$-th data sample at the $i$-th agent, and $m$ is the total number of local samples.
When $m$ is not large, each agent $i$ may compute the full gradient of $f_i(x)$ for deterministic parameter optimization.

  \item
  Online/Streaming learning: when the data samples follow certain statistical distribution and
  are acquired by the agents in an online/streaming fashion, one can define
$f_i(x)$  as the following expected cost
\begin{align}\label{online cost}
f_i(x)=\mathbb{E}_{\xi\sim \mathcal{B}_i}[f_i(x,\xi)], i=1,\ldots,N,
\end{align}
where $\mathcal{B}_i$ denotes the data distribution at agent $i$, and $f_i(x,\xi)$ is the cost function of a random data sample $\xi$.
Under the online setting, only a stochastic estimate $G_i(x,\xi)$ for $\nabla f_i(x)$ can be obtained by the agent and stochastic optimization methods can be used.
Note that if the agent is not able to compute the full gradient in the batch setting, a stochastic gradient estimate by mini-batch data samples can be obtained and the problem is solved in a similar fashion by stochastic optimization.
\end{itemize}
These two settings for local cost functions are popularly used in many machine learning models including deep learning and empirical risk minimization problems \cite{chang2020distributed}.
For both scenarios, many distributed optimization methods have been developed for solving problems \eqref{finite sum prob}.

Specifically, for batch learning and under convex or strongly convex assumptions, algorithms such as the distributed subgradient method \cite{nedic2009distributed}, EXTRA
\cite{shi2015extra}, PG-EXTRA \cite{shi2015proximal} and primal-dual based methods including the alternating direction method of multipliers (ADMM) \cite{boyd2011distributed,chang2015multi,shi2014linear} and the UDA in \cite{alghunaim2019decentralized} are proposed. For non-convex problems, the authors in \cite{zeng2018nonconvex} studied the convergence of proximal decentralized gradient descent (DGD) method with a diminishing step size.
Based on the successive convex approximation (SCA) technique and the gradient tracking (GT) method, the authors in \cite{di2016next} proposed a network successive convex approximation (NEXT) algorithm for \eqref{finite sum prob}, and it is extended to more general scenarios with time varying networks and stronger convergence analysis results \cite{scutari2018parallel,scutari2019distributed}.
In \cite{hong2017prox}, based on an inexact augmented Lagrange method, a proximal primal-dual algorithm (Prox-PDA) is developed for \eqref{finite sum prob} with smooth and non-convex $f_i(x)$ and without $r_i(x)$. A near-optimal algorithm xFilter is further proposed in \cite{sun18optimal} that can achieve the computation complexity lower bound of first-order distributed optimization algorithms.
To handle non-convex and non-smooth problems with polyhedral constraints, the authors of {\cite{zhang2018proximal,zhang2020}} proposed a proximal augmented Lagrangian (AL) method for solving \eqref{finite sum prob}
by introducing a proximal variable and an exponential averaging scheme.
%

For streaming learning, the stochastic proximal gradient consensus method based on ADMM is proposed in \cite{hong2017stochastic} to solve \eqref{finite sum prob} with convex objective functions.
For non-convex problems, the decentralized parallel stochastic gradient descent (D-PSGD) \cite{lian2017can} is applied to
\eqref{finite sum prob} (without $r_i(x)$) for training large-scale neural networks, and the convergence rate is analyzed.
The analysis of D-PSGD relies on an assumption that $\frac{1}{N}\sum_{i=1}^N||\nabla f_i(x)- \nabla f(x)||^2$ is bounded, which implies that the variance of data distributions across the agents should be controlled.
In \cite{tang2018d}, the authors proposed an improved D-PSGD algorithm, called $D^2$,
which removes such assumption and is less sensitive to the data variance across agents.
However, $D^2$ requires a restrictive assumption on the eigenvalue of the mixing matrix.
This assumption is relaxed by the GNSD algorithm in \cite{lu2019gnsd}, which essentially is a stochastic counterpart of the GT algorithm in \cite{scutari2019distributed}.
We should emphasize here that the algorithms in \cite{lian2017can,tang2018d,lu2019gnsd} can only handle smooth problems without constraints and regularization terms.
The work
\cite{bianchi2012convergence} proposed a multi-agent projected stochastic gradient decent (PSGD) algorithm for \eqref{finite sum prob} but $r_i(x)$ is limited to the indicator function of compact convex sets. Besides, there is no convergence rate analysis in \cite{bianchi2012convergence}.

In this paper, we develop a new distributed stochastic optimization algorithm for the non-convex and non-smooth problem \eqref{finite sum prob}. The proposed algorithm is inspired by the proximal AL framework in \cite{zhang2018proximal} and has three new features.
First, the proposed algorithm is a stochastic distributed algorithm that can be used either for streaming/online learning or batch/offline learning with mini-batch stochastic gradients.
Second, the proposed algorithm can handle problem \eqref{finite sum prob} with non-smooth terms that have a polyhedral epigraph, which is more general than  \cite{zhang2018proximal,zhang2020}.
Third, the proposed algorithm incorporates the Nesterov momentum technique for fast convergence.
The Nesterov momentum technique has been applied for accelerating the convergence of
distributed optimization. For example, in \cite{jakovetic2014fast,li2018sharp}, the distributed gradient descent methods with the Nesterov momentum are proposed, and are shown to achieve the optimal iteration complexity for convex problems.
In practice, since SGD with momentum often can converge faster, it is also commonly used to train deep neural networks \cite{yan2018unified,yu2019linear}.
We note that \cite{jakovetic2014fast,li2018sharp,yan2018unified,yu2019linear} are for smooth problems. To the best of our knowledge, the Nesterov momentum technique has not been used for distributed non-convex and non-smooth optimization. 

Our contributions are summarized as follows.
\begin{itemize}
  \item We propose a new stochastic proximal primal dual algorithm with momentum (SPPDM) for non-convex and non-smooth problem \eqref{finite sum prob} under the online/streaming setting.
 For the offline/batch setting where the full gradients of the local cost functions are available, SPPDM reduces to a deterministic algorithm, named the PPDM algorithm.
  \item We show that the proposed SPPDM and PPDM can achieve an $\epsilon$-stationary solution of \eqref{finite sum prob} under a constant
  step size with computation complexities of $\mathcal{O}(1/\epsilon^2)$ and $\mathcal{O}(1/\epsilon)$, respectively, while both have a communication complexity of $\mathcal{O}(1/\epsilon)$.
        The convergence analysis neither requires assumption on the boundedness of $\frac{1}{N}\sum_{i=1}^N\|\nabla f_i(x)- \nabla f(x)\|^2$ nor on the eigenvalues of the mixing matrix.

  \item As shown in Table \ref{Tab:01}, the proposed SPPDM/PPDM algorithms have the same order of computation complexity as the existing methods and lower order of communication complexity when compared with the existing GT based methods.
  \item Numerical experiments for a distributed non-convex regression problem and a deep neural network (DNN) based classification problem show that the proposed algorithms outperforms
	the existing methods.
\end{itemize}
\begin{table*}[t]
\caption{Comparisons of different algorithms}
\label{Tab:01}
\center
\begin{tabular}[l]{ccccccc}
\hline
 Algorithm & objective function & gradient
         & stepsize
         & momentum & computational & communication \\
\hline
  D-PSGD \cite{lian2017can} &$f(\vx)$ & stochastic & decreasing & \xmark & $\mathcal{O}(\frac{N}{\epsilon^2})$  &$\mathcal{O}(\frac{1}{\epsilon^2})$ \\[1mm]
  D$^2$ \cite{tang2018d}& $f(\vx)$ & stochastic  & decreasing & \xmark & $\mathcal{O}(\frac{N}{\epsilon^2})$  &$\mathcal{O}(\frac{1}{\epsilon^2})$\\[1mm]
  GNSD \cite{lu2019gnsd}& $f(\vx)$ & stochastic  & decreasing & \xmark & $\mathcal{O}(\frac{N}{\epsilon^2})$  &$\mathcal{O}(\frac{1}{\epsilon^2})$\\[1mm]
  PR-SGD-M \cite{yu2019linear}&$f(\vx)$ & stochastic & decreasing & \cmark & $\mathcal{O}(\frac{N}{\epsilon^2})$  &$\mathcal{O}(\frac{1}{\epsilon^2})$\\[1mm]
    PSGD \cite{bianchi2012convergence}&$f(\vx)+r(\vx)$ & stochastic & decreasing & \xmark & \xmark & \xmark \\[1mm]
  STOC-ADMM \cite{huang2018mini}&$f(\vx)+r(\vx)$ & stochastic & fixed & \xmark & $\mathcal{O}(\frac{N}{\epsilon^2})$  &$\mathcal{O}(\frac{1}{\epsilon})$\\[1mm]
  Prox-PDA \cite{hong2017prox}&$f(\vx)$ & full  & fixed & \xmark & $\mathcal{O}(\frac{mN}{\epsilon})$
  &$\mathcal{O}(\frac{1}{\epsilon})$  \\[1mm]
    Prox-DGD \cite{zeng2018nonconvex}&$f(\vx)+r(\vx)$ & full  & decreasing & \xmark & \xmark& \xmark \\[1mm]
    Prox-ADMM \cite{zhang2018proximal}&$f(\vx)+r(\vx)$ & full  & fixed & \xmark &$\mathcal{O}(\frac{mN}{\epsilon})$&$\mathcal{O}(\frac{1}{\epsilon})$\\[1mm] \hline
  \multirow{2}*{Proposed}&\multirow{2}*{$f(\vx)+r(\vx)$} & full  & fixed & \cmark &$\mathcal{O}(\frac{mN}{\epsilon})$&$\mathcal{O}(\frac{1}{\epsilon})$\\[1mm]
   \cline{3-7}
   ~&~ & stochastic  & fixed & \cmark &$\mathcal{O}(\frac{N}{\epsilon^2})$&$\mathcal{O}(\frac{1}{\epsilon})$\\
\hline
\end{tabular}
\end{table*}

 {\bf Notation: } We denote $\mI_n$ as the $n$ by $n$ identity matrix and $\oneb$ as the all-one vector, i.e.,  $\oneb=[1,\ldots,1]^{\top}$.
$\langle \va, \bb \rangle$ represents the inner product of vectors $\va$ and $\bb$, $\|\va\|$ is the Euclidean norm of vector $\va$ and $\|\va\|_1$ is the $\ell_1$-norm of vector $\va$; $\otimes$ denotes the Kronecker product.
For a matrix $\mA$, $\sigma_A>0$ denotes its largest singular value.
${\rm diag}\{a_1,\ldots,a_N\}$ denotes a diagonal matrix with $a_1,\ldots,a_N$ being the diagonal entries while ${\rm diag}\{\mA_1,\ldots,\mA_N\}$ denotes a block diagonal matrices with each $\mA_i$ being the $i$th block diagonal matrix. $[\mA]_{ij}$ represents the element of $\mA$ in the $i$th row and $j$th column.

For problem \eqref{finite sum prob}, we denote $\vx=[x_1^{\top},\ldots,x_N^{\top}]^{\top}\in \mathbb{R}^{N n}$, $f(\vx)=\sum_{i=1}^Nf_i(x_i)$, and $r(\vx)=\sum_{i=1}^N r_i(x_i)$. The gradient of $f(\cdot)$ at $\vx$ is denoted by
$$
\nabla f(\vx)= [(\nabla f_1(x_1))^{\top},\ldots,(\nabla f_N(x_N))^{\top}]^{\top},
$$
where $\nabla f_i(x_i)$ is the gradient of $f_i$ at $x_i$. In the online/streaming setting, we denote the stochastic gradient estimates of agents as
$$
G(\vx,\vxi)= [(G_i(x_1,\xi_1))^{\top},\ldots,(G_N(x_N,\xi_N))^{\top}]^{\top},
$$ where $\vxi = [\xi_1^{\top},,\ldots,\xi_N^{\top}]$.
Lastly, we define the following proximal operator of
$r_i$
\begin{align*}
\textmd{prox}_{r_i}^\alpha(x) = \arg\min_{u}\frac{\alpha}{2}\|x-u\|^2+r_i(u),
\end{align*} where $\alpha$ is a parameter.


 {\bf Synopsis:} In Section \ref{sec: alg develop}, the proposed SPPDM and PPDM  algorithms are presented and their connections with existing methods are discussed. Based on an inexact stochastic primal-dual framework, it is shown how the SPPDM and PPDM  algorithms are devised.
Section \ref{sec: conv analysis} presents the theoretical results of the convergence conditions and convergence rate of the SPPDM and PPDM algorithms. The performance of the SPPDM and PPDM algorithms are illustrated in Section \ref{sec: simulation}. Lastly, the conclusion is given in Section \ref{sec:conclusion}.

\section{Algorithm Development}\label{sec: alg develop}

\subsection{Network Model and Consensus Formulation}
Let us denote the multi-agent network as a graph $\mathcal{G}$, which contains a node set $V:=\{1,\ldots,N\}$ and an edge set $\mathcal{E}$ with cardinality $|\mathcal{E}|$. For each agent $i$, it has neighboring agents in the subset
$\mathcal{N}_i:=\{j\in V|(i,j)\in \mathcal{E}\}$ with size $d_i\geq 1$. It is assumed that each agent $i$ can communicate with  its
neighborhood $\mathcal{N}_i$.
We also assume that the graph $\mathcal{G}$ is undirected and is connected in the sense that for
any of two agents in the network there is a path connecting them through the edge links. Thus, problem \eqref{finite sum prob} can be equivalently written as
\begin{subequations}\label{eqn: dec}
\begin{align}
\label{equ_2}\min_{\substack{x_i\\ i=1,\ldots,N}}&~ \sum_{i=1}^N\Big(f_i(x_i)+r_i(x_i)\Big)\\
\label{equ_3} \textmd{s.t.} &~ x_i = x_j, ~\forall (i,j)\in \mathcal{E} .
\end{align}
\end{subequations}

Let us introduce the incidence matrix $\tilde{\mA}\in \mathbb{R}^{|\mathcal{E}|\times n}$ which has $\tilde{\mA}(\ell,i)=1$ and $\tilde{\mA}(\ell,j)=-1$ if $(i,j)\in \mathcal{E}$ with $j>i$, and zero otherwise, for $\ell=1,\ldots,|\Ec|$.
Define the extended incidence matrix as $\mA:=\tilde{\mA}\otimes \mI_{n}$. 
Then \eqref{eqn: dec} is equivalent to
\begin{subequations}\label{eqn: dec_compact}
	\begin{align}
	\label{equ_2_1a}\min_{\vx}&~ f(\vx)+r(\vx)\\
	\label{equ_3_2a} \textmd{s.t.} &~ \mA\vx={\mathbf 0}.
	\end{align}
\end{subequations}
\subsection{Proposed SPPDM and PPDM Algorithm}\label{subsec: sppdm}
In this section, we present the proposed SPPDM algorithm for solving \eqref{eqn: dec_compact} under the online/streaming setting in \eqref{online cost}.
The algorithm steps are outlined in Algorithm \ref{alg_ADMM2}.
\begin{algorithm}
\begin{algorithmic}
\STATE {\bf Given} parameters $\alpha,\beta,\gamma,c,\kappa,\eta_k$ and initial values of $x_i^0$, $i=1,\ldots,N.$
Let
\begin{align}
\psi_i=\gamma+2cd_i+\kappa
\end{align} and set $s_i^0=x_i^0$, $i=1,\ldots,N.$ Do
\vspace{-0.4cm}
\STATE\begin{align}
\nonumber x_i^{\frac{1}{2}} &=(\gamma+cd_i+\kappa)\frac{x_i^0}{\psi_i}+\frac{c}{\psi_i}\sum_{j\in \mathcal{N}_i}x_j^0-\frac{1}{\psi_i}\nabla f_i(x_i^0),\\
\nonumber x_i^1 &= \textmd{prox}_{r_i}^{\alpha_i}\left(x_i^{\frac{1}{2}}\right),~i=1,\ldots,N.
\end{align}
\FOR{communication round $k=1,2,\ldots$}
\FOR{agent $i=1,2,\ldots,N$ (in parallel)}
\vspace{-0.4cm}
\STATE \begin{align}
\label{mom}s_i^{k}&=x_i^k+\eta_k(x_i^k-x_i^{k-1}),\\
\label{xk12}x_i^{k+\frac{1}{2}}&=x_i^{k-1+\frac{1}{2}}+\frac{d_i}{\psi_i}((c-\alpha)x_i^k-c x_i^{k-1}),\\
\nonumber &~~~~+\frac{1}{\psi_i}\sum_{j\in \mathcal{N}_i}((c+\alpha)x_j^{k}-c x_j^{k-1})\\
\nonumber &~~~~+\frac{1}{\psi_i}\big(\gamma(s_i^k-s_i^{k-1})+\kappa(z_i^k-z_i^{k-1})\big)\\
\nonumber &~~~~-\frac{1}{\psi_i|\mathcal{I}|} \sum_{j=1}^{|\mathcal{I}|} (G_i(s_i^k,\xi_{ij}^k)- G_i(s_i^{k-1},\xi_{ij}^{k-1})),\\
\label{xk1}x_i^{k+1} &= \textmd{prox}_{r_i}^{\psi_i}\left(x_i^{k+\frac{1}{2}}\right),\\
\label{zk1} z_i^{k+1}&=z_i^k+\beta (x_i^{k+1}-z_i^k).
 \end{align}
\ENDFOR
\ENDFOR
\end{algorithmic}
\caption{Proposed SPPDM Algorithm}
\label{alg_ADMM2}
\end{algorithm}
Before showing how the algorithm is developed in Section \ref{subsec: alg development},
let us make a few comments about SPPDM.

 In Algorithm~\ref{alg_ADMM2}, $\alpha,\beta,\gamma,c,\kappa,\eta_k$ are some positive constant parameters that depend on the problem instance (such as the Lipschitz constants of $\{\nabla f_i\}$) and the graph Laplacian matrix).
Equations \eqref{mom}-\eqref{zk1} are the updates performed by each agent $i$ within the $k$th communication round, for $k=1,2,\ldots,$ and $i=1,\ldots,N$.
Specifically, step \eqref{mom} is the introduced Nesterov momentum term $s_i^k$ for accelerating the algorithm convergence, where $\eta_k$ is the extrapolation coefficient at iteration $k$.
Step \eqref{xk12} shows how the neighboring variables $\{x_j\}_{j\in \mathcal{N}_i}$ are used for local gradient update. Note here that
in SPPDM the agent uses the sample average
$
  \frac{1}{|\mathcal{I}|} \sum_{j=1}^{|\mathcal{I}|} G_i(s_i^k,\xi_{ij}^k)
$
to approximate $\nabla f_i(s_i^k)$, where $\xi_{ij}^k\sim \mathcal{B}_i,~j=1,\ldots,|\mathcal{I}|,$ denotes the samples drawn by agent $i$ in the $k$th iteration. Besides, in \eqref{xk12}, both approximate gradients at $s_i^k$ and $s_i^{k-1}$ are used.
Step \eqref{xk1} performs the proximal gradient update with respect to the regularization term $r_i(x)$. In step \eqref{xk12}, the variable $\{z_i^k\}$ is a ``proximal" variable introduced for overcoming the non-convexity of $f_i$ (see \eqref{eqn: dec_proximal}), and is updated as in step \eqref{zk1}.

By stacking the variables for all $i=1,\ldots,N$, one can write \eqref{mom}-\eqref{zk1} in a vector form. Specifically, step \eqref{xk12} for $i=1,\ldots,N$, can be expressed compactly as
\begin{align}
\nonumber \vx^{k+\frac{1}{2}}=&~\vx^{k-1+\frac{1}{2}}+\Ub \vx^k-\tilde{\Ub}\vx^{k-1} \notag \\
   &+ \gamma \Psib^{-1}(\vs^{k} -\vs^{k-1} ) + \kappa \Psib^{-1}(\vz^{k} -\vz^{k-1} ) \notag \\
&- {\Psib}^{-1}( \bar G(\vs^k,\vxi^k)-  \bar G(\vs^{k-1},\vxi^{k-1})), \label{xk12_new0}
\end{align}
where $\Ub$ and $\tilde{\Ub}$ are two matrices satisfying 
\begin{align}\label{U1}
&[\Ub]_{ij}=\left\{
\begin{array}{ll}
\frac{d_i}{\psi_i}(c-\alpha), & \hbox{$i=j$,} \\
\frac{c+\alpha}{\psi_i}, & \hbox{$i\neq j$ and $(i,j)\in\mathcal{E}$,}\\
0,& \hbox{\textmd{otherwise.}}
\end{array}
\right.  \\
&[\tilde{\Ub}]_{ij}=\left\{
\begin{array}{ll}
\frac{d_i c}{\psi_i}, & \hbox{$i=j$,} \\
\frac{c}{\psi_i}, & \hbox{$i\neq j$ and $(i,j)\in\mathcal{E}$,}\\
0& \hbox{\textmd{otherwise.}}
\end{array}\label{U2}
\right.
\end{align}
for all $i,j=1,\ldots,N$,
$\Psib$ is a diagonal matrix with its $i$th element being $\psi_i:=\gamma+2cd_i+\kappa$ for $i=1,\ldots,N$, and
\begin{align}\label{stochastic gradient}\bar G(\vs^k,\vxi^k):=\frac{1}{|\mathcal{I}|} \sum_{j=1}^{|\mathcal{I}|} G(\vs^k,\xib_{j}^k).
\end{align}

When the full gradients $\nabla f_i$ are available under the offline/batch setting, the approximate gradient $G_i$ in \eqref{xk12} and \eqref{xk12_new0} can be replaced by $\nabla f_i$. Then, the SPPDM algorithm reduces to the PPDM algorithm. 

%

%
\begin{remark}{\rm \label{rmk1}
	We show that the PPDM algorithm can have a close connection with the PG-EXTRA algorithm in \cite{shi2015proximal}.
	Specifically, let us set
	$\eta_k=0$ (no Nesterov momentum) and $\beta=1$ (no proximal variable). Then, we have $s_i^k=z_i^k=x_i^k$ for all $k,i$, and the momentum and proximal variable update in (\ref{mom}) and (\ref{zk1}) can be removed.
	As a result, \eqref{xk12_new0} reduces to
	\begin{align}
	\nonumber \vx^{k+\frac{1}{2}}=&~\vx^{k-1+\frac{1}{2}}+\Wb \vx^k-\tilde{\Wb}\vx^{k-1} \notag\\
	&- {\Psib}^{-1}( \nabla f(\vx^k)-  \nabla f(\vx^{k-1})), \label{xk12_new}
	\end{align}
	where $\Wb=\Ub+(\gamma+\kappa)\Psib^{-1}$ and $\tilde \Wb=\tilde \Ub+(\gamma+\kappa)\Psib^{-1}$.
	One can see that  \eqref{xk12_new} and \eqref{xk1} have an identical form as the PG-EXTRA algorithm in \cite[Eqn. (3a)-(3b)]{shi2015proximal}.
	Therefore, the proposed PPDM algorithm can be regarded as an accelerated version of the PG-EXTRA with extra capability to handle non-convex problems.
	One should note that, unlike \eqref{U1} and \eqref{U2}, the PG-EXTRA allows a more flexible choice of the mixing matrix $\Wb$, and thus it is also closely related to the GT based methods \cite{chang2020distributed}.
}
\end{remark}

\begin{remark}{\rm  \label{rmk2} The PPDM algorithm also has a close connection with the distributed Nesterov gradient  (D-NG) algorithm in \cite{jakovetic2014fast}.
Specifically, let us set $\alpha=c$ and $\beta=1$ (no proximal variable) and remove the non-smooth regularization term $r(\vx)$. Then, we have $z_i^k=x_i^k$ for all $k,i$, and the proximal gradient update \eqref{xk1} and  the proximal variable update (\ref{zk1}) can be removed.
Under the setting, as shown in  Appendix \ref{derivation of connection}, one can write  \eqref{xk12_new0} of the
PPDM algorithm as
\begin{align}
\label{eqn: new2} \vx^{k+1} &= \tilde{\mW} \vs^k-\Psib^{-1}\nabla f(\vs^k)+\mC^{k},
\end{align}
where
$\mC^{k}=(\tilde \Ub(\vx^k-\vs^k)+\kappa(\vx^k-\vs^k)\Psib^{-1})-\sum_{t=0}^k(\mI-\tilde{\mW})\vx^t$ can regarded as a cumulative correction term.
Note that the D-NG algorithm in \cite[Eqn. (2)-(3)]{jakovetic2014fast} is
\begin{align}
\label{eqn: Dng1} \vs^{k} = \vx^k+\eta_k(\vx^k-\vx^{k-1}),\\
\label{eqn: Dng2} \vx^{k+1} = \tilde{\mW} \vs^k-\Psib^{-1} \nabla f(\vs^k).
\end{align}
One can see that  \eqref{eqn: Dng2} and \eqref{eqn: new2} have a similar form except for the correction term.
Note that the convergence of the D-NG algorithm is proved in \cite{jakovetic2014fast} only for convex problems with a diminishing step size. Therefore, the proposed PPDM algorithm is an enhanced counterpart of the D-NG algorithm with the ability to handle non-convex and non-smooth problems.
	}
\end{remark}

\subsection{Algorithm Development}\label{subsec: alg development}
In this subsection, let us elaborate how the SPPDM algorithm is devised.
Our proposed algorithm is inspired by the proximal AL framework in \cite{zhang2018proximal}. 
First, we introduce a proximal term $\vz$ to \eqref{eqn: dec_compact} as
\begin{subequations}\label{eqn: dec_proximal}
\begin{align}
\label{equ_2_11a}\min_{\vx,\vz}&~ f(\vx)+r(\vx)+\frac{\kappa}{2}\|\vx-\vz\|^2\\
\label{equ_3_22a} \textmd{s.t.} &~ \mA\vx=0,
\end{align}
\end{subequations}
where $\kappa>0$ is a  parameter. Obviously, \eqref{eqn: dec_proximal} is equivalent to  \eqref{eqn: dec_compact}. The purpose of adding the proximal term $\frac{\kappa}{2}\|\vx-\vz\|^2$ is to make the objective function in \eqref{equ_2_11a} strongly convex with respect to $\vx$ when $\kappa>0$ is large enough. Such strong convexity will be exploited for building the algorithm convergence.

Second, let us consider the AL function of \eqref{eqn: dec_proximal} as follows
\begin{align}
\nonumber L_c(\vx,\vz;\vlambda)=&f(\vx)+r(\vx)+\langle \vlambda, \mA \vx \rangle\\
\label{equ_2_L} &+ \frac{c}{2}\| \mA \vx\|^2+\frac{\kappa}{2}\|\vx-\vz\|^2,
\end{align}
where $\vlambda\in\mathbb{R}^{|\mathcal{E}|}$ is the Lagrangian dual variable, and $c>0$ is a positive penalty parameter.
Then, the Lagrange dual problem of \eqref{eqn: dec_proximal} can be expressed as
\begin{align}\label{maxmin}
  \max_{\vlambda}\min_{\vx, \vz}~ L_c(\vx,\vz;\vlambda).
\end{align}
We apply  the following inexact stochastic primal-dual updates with momentum for problem \eqref{maxmin}: for $k=0,1,2,\ldots$,
\begin{align}
\label{va}\vlambda^{k+1}&=\vlambda^k+\alpha \mA \vx^k,\\
\label{sa}\vs^k&=\vx^{k} +\eta_k(\vx^k-\vx^{k-1}),\\
\label{xa}\vx^{k+1}&=\mathop{\arg\min}_{\vx}g(\vx,\vx^k,\vs^k,\vz^k,\vxi^k;\vlambda^{k+1}),\\
\label{za}\vz^{k+1}&=\vz^k+\beta(\vx^{k+1}-\vz^k).
\end{align}
Specifically, \eqref{va} is the dual ascent step with $\alpha>0$ being the dual step size.
In \eqref{sa}, the momentum variable $\vs^k$ is introduced for the primal variable $\vx$.

To update $\vx$, we consider the inexact step as in \eqref{xa}
where $g(\vx,\vx^k,\vs^k,\vz^k,\vxi^k;\vlambda^{k+1})$ is a surrogate function given by
\begin{align}
\nonumber &g(\vx,\vx^k,\vs^k,\vz^k,\vxi^k;\vlambda^{k+1})\\
\nonumber=&\underbrace{f(\vs^k)+ \langle \bar G(\vs^k,\vxi^k),\vx-\vs^k\rangle +\frac{\gamma}{2}\|\vx-\vs^k\|^2}_{\rm (a)}\\
\nonumber&+ r(\vx)+\langle \vlambda^{k+1},\mA\vx\rangle\\
\label{fun_g}&+\underbrace{\frac{c}{2}\|\mA\vx\|^2+\frac{c}{2}\|\vx-\vx^k\|_{\mB^{\top}\mB}^2}_{\rm (b)}+\frac{\kappa}{2}\|\vx-\vz^k\|^2.
\end{align}
In \eqref{fun_g}, the term (a) is a quadratic approximation of $f$ at $\vs^k$ using the stochastic gradient $\bar G$, where $\gamma>0$ is a parameter. In term (b) of \eqref{fun_g}, $\mB$ is the signless incidence matrix of the graph $\Gc$, i.e., $\mB=|\mA|$, which satisfies
$\mA^{\top}\mA+\mB^{\top}\mB=2\mD$, where $\mD={\rm diag}\{d_1,\ldots,d_N\}$ is the degree matrix of $\Gc$.
As shown in \cite{hong2017prox}, the introduction of $\frac{c}{2}\|\vx-\vx^k\|_{\mB^{\top}\mB}^2$ can ``diagonalize"
$\frac{c}{2}\|\mA\vx\|^2$ and lead to distributed implementation of \eqref{xa}. In particular, one can show that
\eqref{xa} with \eqref{fun_g} can be expressed as
\begin{align}
\nonumber\vx^{k+1} &= \textmd{prox}_{r}^{\Psib}\Big(\Psib^{-1}\big(\gamma \vs^k+c\mB^{\top}\mB\vx^k+\kappa \vz^k
\\
\label{pro21}&\qquad\qquad- \bar G(\vs^k,\vxi^k)-\mA^{\top}\vlambda^{k+1}\big)\Big).
\end{align}
As seen, due to the graphical structure of $\mB^\top \mB$, each $x_i^{k+1}$ in \eqref{pro21} can be obtained in a distributed fashion using only $x_j^k$, $j\in \Nc_i$ from its neighbors.
Lastly, we update $\vz$ by applying the gradient descent to $L_c(\vx^{k+1},\vz;\vlambda^{k+1})$
with step size $\beta$, which then yields \eqref{za}.


To show how \eqref{mom}-\eqref{zk1} are obtained, let $\vp^{k}=\mA^{\top} \vlambda^{k}$
and define
\begin{align}
  \vx^{k+\frac{1}{2}}=&\Psib^{-1}\big(\gamma \vs^k+c\mB^{\top}\mB\vx^k+\kappa \vz^k \notag \\
   &\qquad\qquad- \bar G(\vs^k,\vxi^k)-\vp^{k+1}\big).
\end{align}
Then, \eqref{va} can be replaced by
\begin{align}
\label{va2}\vp^{k+1}&=\vp^k+\alpha \mA^\top \mA \vx^k,
\end{align}
and \eqref{pro21} can be written as
\begin{align}\label{pro31}
\vx^{k+1} &= \textmd{prox}_{r}^{\Psib}\big(\vx^{k+\frac{1}{2}}\big).
\end{align}
Moreover, by subtracting  $\vx^{k-1+\frac{1}{2}}$ from $\vx^{k+\frac{1}{2}}$, one obtains
\begin{align}
\vx^{k+\frac{1}{2}}=&\vx^{k-1+\frac{1}{2}} + \gamma\Psib^{-1} (\vs^k - \vs^{k-1})+\kappa\Psib^{-1} (\vz^k-\vz^{k-1})\notag \\
& +c\Psib^{-1}\mB^{\top}\mB(\vx^k-\vx^{k-1})- \Psib^{-1}(\vp^{k+1} - \vp^{k})\notag \\
&- \Psib^{-1} (\bar G(\vs^k,\vxi^k) - \bar G(\vs^{k-1},\vxi^{k-1}) ). \label{pro41}
\end{align}
After substituting \eqref{va2} into \eqref{pro41}, we obtain
\begin{align}
\nonumber \vx^{k+\frac{1}{2}}=&~\vx^{k-1+\frac{1}{2}}+\Ub \vx^k-\tilde{\Ub}\vx^{k-1} \notag \\
&+ \gamma \Psib^{-1}(\vs^{k} -\vs^{k-1} ) + \kappa \Psib^{-1}(\vz^{k} -\vz^{k-1} ) \notag \\
&- {\Psib}^{-1}( \bar G(\vs^k,\vxi^k)-  \bar G(\vs^{k-1},\vxi^{k-1})), \label{xk12_new01}
\end{align}
which is exactly \eqref{xk12_new0} since $\Ub = c\Psib^{-1}\mB^{\top}\mB - \alpha \Psib^{-1}\mA^\top \mA$ and
$\tilde \Ub = c\Psib^{-1}\mB^{\top}\mB$
by \eqref{U1} and \eqref{U2}, respectively.

In summary, \eqref{va} and \eqref{xa} can be equivalently written as \eqref{xk12_new01} and \eqref{pro31}, and therefore we obtain
\eqref{sa}, \eqref{xk12_new01},  \eqref{pro31} and \eqref{za} as the algorithm updates, which correspond to \eqref{mom}-\eqref{zk1} in Algorithm \ref{alg_ADMM2}

Before ending the section, we remark that it is possible to
employ the existing stochastic primal-dual methods such as \cite{huang2018mini} for solving the non-smooth and non-convex problem \eqref{eqn: dec_compact}. However, these methods require strict conditions on $\Ab$. For example, the stochastic ADMM method in \cite{huang2018mini} requires $\Ab$ to have full rank, which cannot happen for the distributed optimization problem \eqref{eqn: dec_compact} since
the graph incidence matrix $\Ab$ for a connected graph must be rank deficient.

\section{Convergence Analysis}\label{sec: conv analysis}
In this section, we present the main theoretical results of the proposed SPPDM and PPDM algorithms
by establishing their convergence conditions and convergence rate.
\subsection{Assumptions}
We first make some proper assumptions on problem \eqref{eqn: dec_compact}. 

\begin{assumption}\label{assu}

\begin{enumerate}[(i)]
  \item The function $f(\vx)$ is a continuously differentiable function with Lipschitz continuous gradients, i.e., for constant $L>0$,
	\begin{align}\label{Lip}
	\|\nabla f(\vx)-\nabla f(\vy)\|\leq L \|\vx-\vy\|,
	\end{align}for all $\vx,\vy$.
	Moreover, assume that there exists a constant $\mu \geq -L$ (possibly negative) such that
	\begin{align}
	\label{lower} f(\vx)-f(\vy)-\langle \nabla f(\vy), \vx-\vy\rangle \geq \frac{\mu}{2}\|\vx-\vy\|^2,
	\end{align}	for all $\vx,\vy$.
	
  \item {The objective function $f(\vx)+r(\vx)$ is bounded from below in the feasible set $\{\vx|\mA\vx=0\}$, i.e.,
  $$
 f(\vx)+r(\vx)>\underline{f}>-\infty,
  $$
  for some constant $\underline{f}$.}
\end{enumerate}
\end{assumption}

\begin{assumption}  \label{assu2} The epigraph of each $r_i(x_i)$, i.e.,
	$\{(x_i,y_i)~|~ r_i(x_i)\leq y_i\}$,
	is a polyhedral set and has a compact form as 
	\begin{align}\label{compact}
	S_{x,i}x_i+S_{y,i}y_i\geq \zeta_i,
	\end{align}
	where $S_{x,i} \in \mathbb{R}^{q_i \times n}$, $S_{y,i}\in \mathbb{R}^{q_i}$ and $\zeta_i\in \mathbb{R}^{q_i}$ are some constant matrix and vectors.


\end{assumption}
By \eqref{compact}, problem \eqref{eqn: dec_compact} can be written as
\begin{subequations}\label{eqn: dec_proximal1}
\begin{align}
\label{equ_3_1_1} \min_{\vx,\vy}&~ f(\vx)+\oneb^{\top}\vy\\
\label{equ_3_1_2} \textmd{s.t.} &~ \mA\vx=\zerob\\
\label{equ_3_1_3} &~\mS_{x}\vx+\mS_{y}\vy\geq \vzeta,
\end{align}
\end{subequations}
Here, $\vy=[y_1,\ldots,y_N]^{\top}$, $\mS_{x}={\rm diag}\{S_{x,1},\ldots,S_{x,N}\}$, $\mS_{y}={\rm diag}\{S_{y,1},\ldots,S_{y,N}\}$,
and $\vzeta=[\zeta_1^{\top},\ldots,\zeta_N^{\top}]^{\top}$.

Let $\vmu=[\mu_1,\ldots,\mu_q]^{\top}\in\mathbb{R}^{q}$, $q=\sum_{i=1}^N q_i$, be the dual variable associated with \eqref{equ_3_1_3}. Then, the Karush-Kuhn-Tucker (KKT) conditions of \eqref{eqn: dec_proximal1} are given by
\begin{subequations}\label{eqn: KKT}
\begin{align}
\label{gradient_x}&\nabla f(\vx)+\mA^{\top}\vlambda-\mS_{x}^{\top}\vmu=0,~\mS_{y}^{\top}\vmu=\oneb,\\
 \label{primal_feasiable}&\mA\vx=0,~\mS_{x}\vx+\mS_{y}\vy- \vzeta\geq0,~\vmu\geq 0,\\
 \label{CS}
&\mu_j[\mS_{x}\vx+\mS_{y}\vy- \vzeta]_j=0, ~j=1,\ldots,q.
\end{align}
\end{subequations}

For online/streaming learning, we also make the following standard assumptions that the gradient estimates are unbiased and have a bounded variance.
\begin{assumption}\label{assu3}
The stochastic gradient estimate $G_i(x,\xi)$ satisfies
\begin{align}
\label{unbiased}&\mathbb{E}[G_i(x,\xi)]=\nabla f_i(x)\\
\label{variance}&\mathbb{E}[\|G_i(x,\xi)-\nabla f_i(x)\|^2]\leq \sigma^2,
\end{align}
for all $x$, where $\sigma>0$ is a constant, and the expectation $\E$ is with respect to the random sample $\xi\sim \mathcal{B}_i$.
\end{assumption}
It is easy to check that the gradient estimate of the mini-batch samples satisfies
\begin{align}
   \mathbb{E}\bigg[\bigg \| \frac{1}{|\mathcal{I}|} \sum_{j=1}^{|\mathcal{I}|}  G_i(x,\xi_j)-\nabla f_i(x)\bigg \|^2\bigg ] \leq \sigma^2/|\mathcal{I}|.
\end{align}

\subsection{Convergence Analysis of SPPDM}
We define the following term
\begin{align}\label{QQ}
\!\!\!Q(\vx,\vlambda)=&\|\vx-\textmd{prox}_{r}^1(\vx-\nabla f(\vx)-\mA^{\top}\vlambda)\|^2
+\|\mA\vx\|^2
\end{align}
as the optimally gap for a primal-dual solution $(\vx,\vlambda)$ of problem \eqref{eqn: dec_compact}.
Obviously, one can shown that when $Q(\vx^\star,\vlambda^\star)=0$, $(\vx^\star,\vlambda^\star)$ is a KKT solution of \eqref{eqn: dec_compact} which satisfies the conditions in \eqref{eqn: KKT} together with some $\vy^\star$ and $\vmu^\star$. We define that $(\vx^\star,\vlambda^\star)$ is an $\epsilon$-stationary solution of \eqref{eqn: dec_compact} if $Q(\vx^\star,\vlambda^\star)<\epsilon$.

The convergence  result is stated in the following theorem.
\begin{theorem}\label{The_conver}
Assume that Assumptions \ref{assu}-\ref{assu3} hold true, and let parameters satisfy
\begin{align}
  &\kappa>-\mu,~\gamma>3L,\label{parameter condition} \\
  &\label{etak}\eta_k\leq\sqrt{\frac{\kappa+2c+\gamma-3L}{2(\gamma-\mu+3L)}} :=\bar \eta ,
\end{align}
moreover, let $0<\alpha\leq c$ and $\beta>0$ be both sufficiently small (see \eqref{alpha_q}  and \eqref{equ_beta}). Then, for a sequence $\{\vx^k,\vz^k,\vlambda^k\}$ generated by Algorithm \ref{alg_ADMM2}, it holds that
\begin{align}
\label{QQQ1}
 \!\!\! \!\! \min_{k=0,\ldots,K-1} \E[ Q(\vx^k,\vlambda^{k+1})] \leq C_0\left(\frac{\phi^0-\underline{f}}{K}+\frac{C_1N\sigma^2}{|\mathcal{I}|}\right), 
\end{align}
where $C_0$ and $C_1$ are some positive constants depending on the problem parameters (see  \eqref{C0} and \eqref{C1}). In addition, $\phi^0$ is a constant defined in  \eqref{potential}.
\end{theorem}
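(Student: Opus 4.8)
The plan is to construct a potential (Lyapunov) function $\phi^k$ whose expected decrease per iteration controls the optimality gap $Q(\vx^k,\vlambda^{k+1})$, and then telescope. First I would introduce a candidate potential of the form
\[
\phi^k = L_c(\vx^k,\vz^k;\vlambda^k) + a_1\|\vx^k-\vx^{k-1}\|^2 + a_2\|\vz^k-\vz^{k-1}\|^2 + a_3\|\vlambda^k-\vlambda^{k-1}\|^2
\]
for suitable positive constants $a_1,a_2,a_3$ (this is the $\phi^0$ referenced in \eqref{potential}). The momentum term \eqref{sa} forces the extra $\|\vx^k-\vx^{k-1}\|^2$ term, and the proximal-variable update \eqref{za} together with the dual-ascent step \eqref{va} are what make the $\vz$- and $\vlambda$-difference terms necessary; these are standard ingredients in Prox-PDA / proximal-AL analyses (cf.\ \cite{hong2017prox,zhang2018proximal}).

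Next I would bound the one-step change $\E[\phi^{k+1}-\phi^k]$ by establishing three ingredients in order: (a) a \emph{dual bound} relating $\|\vlambda^{k+1}-\vlambda^k\|^2$ to primal quantities — since $\vlambda^{k+1}-\vlambda^k=\alpha\mA\vx^k$ and the optimality condition of \eqref{xa} lets us express $\mA^\top\vlambda^{k+1}$ in terms of gradients and successive iterates, one gets $\|\vlambda^{k+1}-\vlambda^k\|^2$ controlled (using $\sigma_{\min}$ of $\mA\mA^\top$ on the range space) by $\|\vx^{k+1}-\vx^k\|^2$, $\|\vx^k-\vx^{k-1}\|^2$, the gradient-Lipschitz term, and the stochastic error; (b) a \emph{descent estimate} for $L_c$ in the primal variables, using the strong convexity of the surrogate $g$ (coefficient $\gamma$ on the quadratic term, plus $\kappa$ from the proximal term) against the $L$-smoothness of $f$ and the lower-curvature bound $\mu$ from \eqref{lower}, which is precisely where the conditions $\kappa>-\mu$ and $\gamma>3L$ enter; (c) handling the momentum cross-terms $\langle\cdot,\vx^k-\vx^{k-1}\rangle$ via Young's inequality, which is where the bound $\eta_k\le\bar\eta$ in \eqref{etak} is forced so that the negative $\|\vx^{k+1}-\vx^k\|^2$ coefficient dominates the momentum-induced positive $\|\vx^k-\vx^{k-1}\|^2$ coefficient. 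The stochastic-gradient term contributes, via Assumption \ref{assu3} and the mini-batch variance bound $\sigma^2/|\mathcal{I}|$, an additive $\mathcal{O}(N\sigma^2/|\mathcal{I}|)$ per step after taking conditional expectations (the unbiasedness kills the cross term). Combining (a)–(c) yields
\[
\E[\phi^{k+1}] \le \E[\phi^k] - c_x\,\E\|\vx^{k+1}-\vx^k\|^2 - c_z\,\E\|\vz^{k+1}-\vz^k\|^2 - c_\lambda\,\E\|\mA\vx^k\|^2 + c_\sigma\frac{N\sigma^2}{|\mathcal{I}|}
\]
with all $c_x,c_z,c_\lambda>0$ provided $\alpha$ and $\beta$ are taken small enough (the explicit thresholds \eqref{alpha_q} and \eqref{equ_beta}).

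Then I would relate the right-hand ``progress'' terms to the gap $Q$. From \eqref{QQ}, $Q(\vx^k,\vlambda^{k+1})$ has two pieces: $\|\mA\vx^k\|^2$ is already present, and the proximal-residual piece $\|\vx^k-\operatorname{prox}_r^1(\vx^k-\nabla f(\vx^k)-\mA^\top\vlambda^{k+1})\|^2$ must be bounded — using nonexpansiveness of the prox, the optimality condition of \eqref{xa}/\eqref{xk1} (which gives $\vx^{k+1}=\operatorname{prox}_r^{\Psib}(\cdots)$), and the Lipschitz bound on $\nabla f$ — by a combination of $\|\vx^{k+1}-\vx^k\|^2$, $\|\vx^k-\vx^{k-1}\|^2$, $\|\vz^{k+1}-\vz^k\|^2$ and the stochastic error. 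Substituting this bound back shows $\E[Q(\vx^k,\vlambda^{k+1})] \le C_0\big(\E[\phi^k]-\E[\phi^{k+1}]\big) + C_0 c_\sigma\frac{N\sigma^2}{|\mathcal{I}|}$ for a suitable $C_0>0$ (this is \eqref{C0}, \eqref{C1}). Finally, summing over $k=0,\dots,K-1$, telescoping $\E[\phi^k]$, and using the lower-boundedness Assumption \ref{assu}(ii) to replace $\E[\phi^K]$ by $\underline f$ gives $\frac1K\sum_{k=0}^{K-1}\E[Q(\vx^k,\vlambda^{k+1})]\le C_0\big(\frac{\phi^0-\underline f}{K}+C_1\frac{N\sigma^2}{|\mathcal{I}|}\big)$, and bounding the average below by the minimum yields \eqref{QQQ1}.

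I expect the main obstacle to be step (a)–(c) done \emph{simultaneously}: one must choose the potential coefficients $a_1,a_2,a_3$ and verify that a single common choice makes every residual coefficient $c_x,c_z,c_\lambda$ strictly positive under exactly the stated parameter conditions \eqref{parameter condition}–\eqref{etak}. In particular the dual bound in (a) couples $\|\vlambda^{k+1}-\vlambda^k\|^2$ back into $\|\vx^{k+1}-\vx^k\|^2$, so the $L_c$-descent from (b) must be large enough to absorb both this coupling and the momentum cross-terms from (c) — tracking these interlocking inequalities to pin down the admissible ranges of $\alpha$, $\beta$, and $\bar\eta$ is the delicate bookkeeping at the heart of the proof. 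The non-smooth term $r$ is handled cleanly throughout by the prox nonexpansiveness plus the polyhedral-epigraph reformulation \eqref{eqn: dec_proximal1}, so it should not be the bottleneck; the non-convexity is absorbed entirely by the proximal variable $\vz$ and the $\kappa>-\mu$ condition.
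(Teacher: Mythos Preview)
Your overall architecture (potential function $\to$ per-step descent $\to$ bound $Q$ by progress terms $\to$ telescope) matches the paper, but your choice of potential and your step~(a) are substantively different from the paper's, and step~(a) as written would not go through in the non-smooth setting.

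You propose $\phi^k = L_c + a_1\|\vx^k-\vx^{k-1}\|^2 + a_2\|\vz^k-\vz^{k-1}\|^2 + a_3\|\vlambda^k-\vlambda^{k-1}\|^2$, which is the classical Prox-PDA potential from \cite{hong2017prox}. Your dual bound~(a) then relies on the optimality condition of the $\vx$-subproblem to ``express $\mA^\top\vlambda^{k+1}$ in terms of gradients and successive iterates.'' That works when $r\equiv 0$, because the optimality condition is an \emph{equation}. Here, however, the optimality condition of \eqref{xa}/\eqref{pro21} is an \emph{inclusion}: $\mA^\top\vlambda^{k+1} \in -\partial r(\vx^{k+1}) + (\text{smooth terms})$, so taking successive differences produces an uncontrolled $\xi^{k+1}-\xi^k$ with $\xi^k\in\partial r(\vx^k)$. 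Prox non-expansiveness does not bound subgradient differences, and polyhedrality of the epigraph does not make $\partial r$ Lipschitz (think $r=\|\cdot\|_1$ at a sign change). So the chain $\|\vlambda^{k+1}-\vlambda^k\|^2 \lesssim \|\vx^{k+1}-\vx^k\|^2 + \cdots$ breaks. A second, related problem: with $\vlambda^k$ not a priori bounded, $L_c$ alone is not bounded below, so replacing $\E[\phi^K]$ by $\underline f$ at the end is not justified for your potential.

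The paper avoids both issues by using a different potential, namely $\phi^{k} = L_c(\vx^{k},\vz^{k};\vlambda^{k}) + \tau\|\vx^{k}-\vx^{k-1}\|^2 + 2\bigl(P(\vz^{k}) - d(\vz^{k};\vlambda^{k})\bigr)$, where $d(\vz;\vlambda)=\min_{\vx}L_c$ and $P(\vz)=\min_{\mA\vx=0}\{f+r+\tfrac{\kappa}{2}\|\vx-\vz\|^2\}$. Weak duality gives $\phi^k\ge P(\vz^k)\ge\underline f$ directly. More importantly, the descent analysis never needs to difference subgradients: the ascent $-\alpha\|\mA\vx^k\|^2$ in $L_c$ is combined with the \emph{increase} of $d$ to complete a square around the auxiliary point $\vx(\vz^k;\vlambda^{k+1})=\arg\min_{\vx}L_c$, and the residual $\|\mA(\vx^k-\vx(\vz^k;\vlambda^{k+1}))\|^2$ is controlled by a primal error bound (Lemma~\ref{primal_error_bound}). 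The polyhedral-epigraph Assumption~\ref{assu2} enters precisely through a \emph{dual error bound} $\|\vx(\vz,\vlambda)-\vx(\vz)\|\le\sigma_5\|\mA\vx(\vz;\vlambda)\|$ (Lemma~\ref{dual_error_bound}), which is what ultimately makes the $P-d$ term cooperate. So the non-smooth term is not a side issue handled ``cleanly by prox non-expansiveness''; it is exactly the bottleneck that forces the $P-d$ potential and the error-bound machinery in place of your step~(a).
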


To prove Theorem \ref{The_conver}, the key is to define a novel stochastic potential function $\mathbb{E}[\phi^{k+1}]$ in \eqref{potential} and analyze the conditions for which $\mathbb{E}[\phi^{k+1}]$ descends monotonically with the iteration number $k$ (Lemma \ref{Lemma_potential}). To achieve the goal, several approximation error bounds for the primal variable $\vx^k$ (Lemma \ref{primal_error_bound}) and the dual variable $\vlambda^{k}$  (Lemma \ref{dual_error_bound}) are derived. Interested readers may refer to Appendix \ref{appendix thm1} for the details.

By Theorem \ref{The_conver}, we immediately obtain the following corollary.
\begin{corollary}\label{conv to stationary sol}Let
\begin{align}
\label{bach_K}|\mathcal{I}|\geq\frac{2NC_0C_1\sigma^2}{\epsilon}~{\rm and~} K\geq \frac{2C_0(\phi^0-\underline{f})}{\epsilon}.
\end{align}
Then,
\begin{align}
\label{QQQ}\min_{k=0,\ldots,K-1} \E[ Q(\vx^k,\vlambda^{k+1})]  \leq \epsilon,
\end{align}
that is, an $\epsilon$-stationary solution of problem \eqref{eqn: dec_compact} can be obtained in an expected sense.
\end{corollary}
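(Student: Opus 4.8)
The plan is to treat Corollary \ref{conv to stationary sol} as a direct specialization of the master bound \eqref{QQQ1} already established in Theorem \ref{The_conver}, so no new descent analysis is needed; the whole argument is an arithmetic substitution that splits the error budget $\epsilon$ into two equal halves. I would start by invoking Theorem \ref{The_conver}, which under Assumptions \ref{assu}--\ref{assu3} and the stated parameter conditions guarantees
\begin{align}
\label{restate}
\min_{k=0,\ldots,K-1} \E[Q(\vx^k,\vlambda^{k+1})] \leq C_0\left(\frac{\phi^0-\underline{f}}{K}+\frac{C_1N\sigma^2}{|\mathcal{I}|}\right)
= \frac{C_0(\phi^0-\underline{f})}{K}+\frac{C_0 C_1 N\sigma^2}{|\mathcal{I}|}.
\end{align}
The right-hand side is a sum of an optimization/bias term that decays in $K$ and a stochastic/variance term that decays in the batch size $|\mathcal{I}|$; the two conditions in \eqref{bach_K} are precisely tuned so that each term is at most $\epsilon/2$.

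Next I would bound the two terms separately. For the first term, the iteration-count condition $K \geq 2C_0(\phi^0-\underline{f})/\epsilon$ gives
\begin{align}
\frac{C_0(\phi^0-\underline{f})}{K} \leq \frac{C_0(\phi^0-\underline{f})}{2C_0(\phi^0-\underline{f})/\epsilon} = \frac{\epsilon}{2}.
\end{align}
For the second term, the batch-size condition $|\mathcal{I}| \geq 2NC_0C_1\sigma^2/\epsilon$ gives
\begin{align}
\frac{C_0 C_1 N\sigma^2}{|\mathcal{I}|} \leq \frac{C_0 C_1 N\sigma^2}{2NC_0C_1\sigma^2/\epsilon} = \frac{\epsilon}{2}.
\end{align}
Here I would note the one mild regularity point worth stating explicitly: the constant $\phi^0-\underline{f}$ must be positive (so that the condition on $K$ is nonvacuous), which follows because $\phi^0$ is a value of the potential function and $\underline{f}$ is the lower bound from Assumption \ref{assu}(ii).

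Finally, I would combine the two halves. Substituting both displayed bounds into \eqref{restate} yields
\begin{align}
\min_{k=0,\ldots,K-1} \E[Q(\vx^k,\vlambda^{k+1})] \leq \frac{\epsilon}{2}+\frac{\epsilon}{2} = \epsilon,
\end{align}
which is exactly \eqref{QQQ} and certifies that some iterate $\vx^k$ with $k\in\{0,\ldots,K-1\}$ is, in expectation over the random samples $\{\vxi^k\}$, an $\epsilon$-stationary solution of problem \eqref{eqn: dec_compact} in the sense defined via the optimality gap $Q$ in \eqref{QQ}. There is no genuine obstacle in this argument: the entire content lives in Theorem \ref{The_conver}, and the corollary merely reads off how large $K$ and $|\mathcal{I}|$ must be. If anything, the only point requiring a sentence of care is the interpretation rather than the inequality, namely recalling from the discussion preceding the theorem that $Q(\vx^k,\vlambda^{k+1})<\epsilon$ is the defining condition for an $\epsilon$-stationary point, so that the averaged/minimum bound over the horizon translates into the existence of a good iterate in expectation.
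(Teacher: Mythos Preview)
Your proposal is correct and matches the paper's approach: the paper simply states that the corollary follows ``immediately'' from Theorem~\ref{The_conver} without writing out a proof, and your argument---splitting the right-hand side of \eqref{QQQ1} into the $K$-term and the $|\mathcal{I}|$-term and bounding each by $\epsilon/2$ via the conditions in \eqref{bach_K}---is exactly the intended one-line substitution.
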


\begin{remark}{\rm
Given a mini-batch size $|\mathcal{I}|=\Omega(1/\epsilon)$, Corollary \ref{conv to stationary sol} implies that the proposed
SPPDM algorithm has the convergence rate of $\mathcal{O}(1/\epsilon)$ to obtain an $\epsilon$-stationary solution.
As a result, the corresponding communication complexity of the
SPPDM algorithm is $\mathcal{O}(|\Ec|/\epsilon)$  while the
computational complexity is $\mathcal{O}(N|\mathcal{I}|/\epsilon)
=
\mathcal{O}(N/\epsilon^2)$.
As shown in Table \ref{Tab:01}, the communication complexity $\mathcal{O}(1/\epsilon)$ of the SPPDM algorithm is smaller than $\mathcal{O}(1/\epsilon^2)$ of D-PSGD \cite{lian2017can},  D$^2$ \cite{tang2018d}, GNSD \cite{lu2019gnsd} and  R-SGD-M \cite{yu2019linear}. The STOC-ADMM \cite{huang2018mini} has the same computation and communication complexity orders as the SPPDM algorithm, but it is not applicable to  \eqref{eqn: dec_compact}.
}\end{remark}

\subsection{Convergence Analysis of PPDM}

When the full gradient $\nabla f(\vx^k)$ is available for the PPDM algorithm, one can deduce a similar convergence result.
%
%
\begin{theorem}\label{thm PPDM}
Assume Assumptions \ref{assu}-\ref{assu2} and the same conditions in \eqref{parameter condition}, \eqref{etak},  \eqref{equ_beta} and \eqref{alpha_q} hold true.
 \begin{itemize}
   \item Every limit point of the sequence $\{\vx^k,\vz^k,\vlambda^k\}$ generated by the PPDM algorithm is a KKT solution
of \eqref{eqn: dec_compact}. 

   \item Given $K\geq \frac{C_0(\phi^0-\underline{f})}{\epsilon}$, we have
\begin{align}
\nonumber \min_{k=0,\ldots,K-1} Q(\vx^k,\vlambda^{k+1}) \leq C_0\left(\frac{\phi^0-\underline{f}}{K}\right)\leq \epsilon.
\end{align}
 \end{itemize}
\end{theorem}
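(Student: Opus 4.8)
The plan is to derive both parts of Theorem~\ref{thm PPDM} from the machinery already built for Theorem~\ref{The_conver}, specialized to the deterministic regime. When the full gradient is used in Algorithm~\ref{alg_ADMM2}, the surrogate gradient $\bar G(\vs^k,\vxi^k)$ is replaced by $\nabla f(\vs^k)$, so Assumption~\ref{assu3} holds with $\sigma=0$ and every expectation in the analysis becomes superfluous. In particular, \eqref{QQQ1} collapses to $\min_{k=0,\ldots,K-1}Q(\vx^k,\vlambda^{k+1})\leq C_0(\phi^0-\underline f)/K$, and taking $K\geq C_0(\phi^0-\underline f)/\epsilon$ yields the second bullet immediately. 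The remaining work is the limit-point (KKT) claim.

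For that, the first step is to use the descent estimate of Lemma~\ref{Lemma_potential} with $\sigma=0$: the potential $\phi^{k+1}$ is nonincreasing and $\phi^k-\phi^{k+1}$ dominates a positive combination of $\|\vx^{k+1}-\vx^k\|^2$, $\|\vx^k-\vx^{k-1}\|^2$, $\|\vz^{k+1}-\vz^k\|^2$ and $\|\mA\vx^k\|^2$. Since $\phi^k$ is bounded below (by Assumption~\ref{assu}(ii) and the form of $\phi^k$ in \eqref{potential}), $\{\phi^k\}$ converges and these successive-difference terms are summable, hence $\|\vx^{k+1}-\vx^k\|\to 0$, $\|\vz^{k+1}-\vz^k\|\to 0$ and $\|\mA\vx^k\|\to 0$. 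The dual recursion \eqref{va} then forces $\|\vlambda^{k+1}-\vlambda^k\|=\alpha\|\mA\vx^k\|\to 0$, the momentum step \eqref{sa} gives $\|\vs^k-\vx^k\|=\eta_k\|\vx^k-\vx^{k-1}\|\to 0$, and \eqref{zk1} gives $\|\vx^k-\vz^k\|\to 0$.

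The second step is to invoke the primal and dual error bounds (Lemmas~\ref{primal_error_bound}--\ref{dual_error_bound}), again with the variance term removed, which control $Q(\vx^k,\vlambda^{k+1})$ by a fixed multiple of the quantities just shown to vanish; consequently $Q(\vx^k,\vlambda^{k+1})\to 0$ along the \emph{entire} sequence. Then I would take an arbitrary limit point: if $(\vx^{k_j},\vz^{k_j},\vlambda^{k_j})\to(\vx^\star,\vz^\star,\vlambda^\star)$, then $\vlambda^{k_j+1}\to\vlambda^\star$ as well, and since $\nabla f$ is continuous and $\textmd{prox}_r^1$ is nonexpansive, $Q$ is continuous, so $Q(\vx^\star,\vlambda^\star)=\lim_j Q(\vx^{k_j},\vlambda^{k_j+1})=0$. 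By the remark following \eqref{QQ}, $Q(\vx^\star,\vlambda^\star)=0$ forces $(\vx^\star,\vlambda^\star)$ to satisfy the KKT system \eqref{eqn: KKT} (with appropriate $\vy^\star,\vmu^\star$ obtained from the polyhedral description of $r$), i.e.\ it is a KKT solution of \eqref{eqn: dec_compact}.

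The main obstacle is the second step: showing that $Q(\vx^k,\vlambda^{k+1})$ is genuinely dominated by the summable successive-difference terms, not just its running minimum. The primal part follows from the proximal optimality condition behind \eqref{pro21}--\eqref{pro31}, but the dual residual bookkeeping—relating $\|\mA\vx^k\|$ and the successive iterate gaps to $\mA^{\top}\vlambda^{k+1}$—needs the polyhedral structure of $r$ from Assumption~\ref{assu2}, and one must also confirm that $\{\vlambda^k\}$ stays bounded so that limit points exist at all. These are exactly the estimates carried out in the proof of Theorem~\ref{The_conver}, so once they are re-read with $\sigma=0$, the deterministic conclusion follows from the continuity/subsequence argument above.
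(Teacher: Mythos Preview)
Your plan is correct and follows the paper's approach: specialize the stochastic analysis to $\sigma=0$, so that \eqref{QQQ1} immediately gives the second bullet, and Lemma~\ref{Lemma_potential} (now without the $C_1N\sigma^2/|\mathcal{I}|$ term) makes $\phi^k$ a genuine Lyapunov function whose successive differences force the relevant residuals to vanish. One small slip: the descent inequality \eqref{phiphi4} does \emph{not} directly dominate $\|\mA\vx^k\|^2$; it dominates $\|\mA\vx(\vz^k;\vlambda^{k+1})\|^2$, $\|\vx^k-\hat\vx^{k+1}\|^2$, $\|\hat\vx^{k+1}-\vz^k\|^2$, and $\|\vx^k-\vx^{k-1}\|^2$ (and in the deterministic case $\hat\vx^{k+1}=\vx^{k+1}$). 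You recover $\|\mA\vx^k\|\to 0$ only after invoking the primal error bound as in \eqref{gap2}, which you do in your ``second step,'' so the logic survives once you reorder the presentation.

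For the KKT claim the paper is terser than you: after observing that $\|\vx^k-\hat\vx^{k+1}\|$, $\|\vz^k-\hat\vx^{k+1}\|$, and $\|\mA\vx(\vz^k;\vlambda^{k+1})\|$ all tend to zero, it simply cites \cite[Theorem~2.4]{zhang2020}. Your route---bounding $Q(\vx^k,\vlambda^{k+1})$ pointwise by the same four residuals (exactly the estimate \eqref{gap1}--\eqref{gap2} already used in the proof of Theorem~\ref{The_conver}), concluding $Q(\vx^k,\vlambda^{k+1})\to 0$ along the whole sequence, and then passing to limits by continuity of $Q$---is a valid self-contained alternative that avoids the external reference. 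Your worry about boundedness of $\{\vlambda^k\}$ is legitimate for the result to be nonvacuous, but note that the theorem as stated (``every limit point'') does not require you to produce one; the paper does not address this either.
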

\noindent The proof is presented in Appendix \ref{appendix thm2}..

To our knowledge, Theorem \ref{The_conver} and Theorem \ref{thm PPDM} are the first results that show the $\mathcal{O}(1/\epsilon)$ communication complexity of the distributed primal-dual method with momentum for non-convex and non-smooth problems.
Numerical results in the next section will demonstrate that the SPPDM and PPDM algorithms can exhibit favorable convergence behavior than the existing methods.

\section{Numerical Results}\label{sec: simulation}
In this section, we examine the numerical performance of the proposed SPPDM/PPDM algorithm
and present comparison results with the existing methods.

\subsection{Distributed Non-Convex Truncated Losses}
We consider a linear regression model $y_j={ h_j}^{\top}x_{*}+\nu_j$, $j=1,\ldots,M$, where $M$ is the number of data samples.
Here $y_j$ is the observed data sample and $h_j\in \mathbb{R}^n$ is the input data; $x_{*}\in \mathbb{R}^n$ is the ground truth; $\nu_j$ is the additive random noise.

Let $H:=[h_1,\ldots,h_M]^{\top}=[H_1^{\top},\ldots,H_N^{\top}]^{\top}\in \mathbb{R}^{M\times n}$, where each $H_i\in \mathbb{R}^{m\times n}$ corresponds to the data matrix owned by agent $i$ which has $m=M/N$ data points. The entries of $H$ are generated independently following the standard Gaussian distribution.
The ground truth $x_{*}$ is a $S$-sparse vector to be estimated, whose non-zero entries are generated from the uniform distribution $U[-1,1]$.
The noise $\nu_j$ follows the Gaussian distribution $\mathcal{N}(0,4)$. Then the data samples $y_j,~j=1,\ldots,M$, are generated by the above linear model.

Consider the following distributed regression problem with a nonconvex truncated loss \cite{xu2018learning}
\begin{align}
\label{truncated_loss}\min_{x\in[-1,1]}\sum_{i=1}^N\big(f_i(x)+\varsigma_i \|x\|_1\big),
\end{align}
where
\begin{align*}
f_i(x) = \frac{\rho}{2N_i}\sum_{j=1}^{N_i}\log\left(1+\frac{\|y_j-a_j^{\top}x\|^2}{\rho}\right),
\end{align*}
and $\rho$ is a parameter to determine the truncation level. We set $m=150$, $n=256$, $S=16$, and $\rho=3$. Moreover, we consider a circle graph with $N=20$ agents.


For the online setting, we compare the SPPDM algorithm (Algorithm \ref{alg_ADMM2}) with PSGD \cite{bianchi2012convergence} and STOC-ADMM \cite{huang2018mini}.
For the offline setting, we compare the PPDM algorithm with Prox-DGD \cite{zeng2018nonconvex}, PG-EXTRA \cite{shi2015proximal}, Prox-ADMM \cite{zhang2018proximal} and STOC-ADMM \cite{huang2018mini}.
Note that theoretically PG-EXTRA and STOC-ADMM are not guaranteed to converge for the non-convex problem \eqref{eqn: dec_compact}. We implement these two methods simply for comparison purpose.

For the PG-EXTRA, we choose the stepsize $\ell=0.05$ according to the sufficient condition
suggested in  \cite{shi2015proximal}.
According to their convergence conditions, the diminishing step size $\ell=\frac{1}{3\sqrt{k+100}}$  is used for the PSGD and Prox-DGD.
The primal and dual stepsize for the STOC-ADMM is chosen according to the convergence condition
suggested in \cite{huang2018mini}.   

For PSGD,  Prox-DGD and PG-EXTRA, the mixing matrix follows the metropolis weight
\begin{equation}\label{eqn: W}
[\mW]_{i j} \triangleq\left\{\begin{array}{ll}\frac{1}{\max \left\{d_{i}, d_{j}\right\}+1}, & \text { for }(i, j) \in \mathcal{E}, \\ 0, & \text { for }(i, j) \notin \mathcal{E} \text { and } i \neq j. \\ 1-\sum_{j \neq i} w_{i j}, & \text { for } i=j\end{array}\right.
\end{equation}


If not specified, the parameters of the SPPDM/PPDM and  the Prox-ADMM are given as $\alpha=2$, $\kappa=1$, $c=2$, $\gamma=3$, $\beta=0.9$\footnote{
 By  analysis, the Hessian matrix for the function $f_i(x)$ is $\frac{1}{N_i}\sum_{j=1}^{N_i}\frac{\rho h_jh_j^T(\rho-\|h_j^Tx-y_j\|^2)}{(\rho+\|h_j^Tx-y_j\|^2)^2}$. It shows that the maximum  eigenvalue of this Hessian matrix is smaller than 1 ($L<1$) with the given parameter. Thus, the parameters of SPPDM/PPDM
satisfy the conditions stated in Theorem
\ref{The_conver}.}
For the proposed SPPDM, we consider two cases about $\eta_k$, one is $\eta_k=0$ without momentum, and the other is  based on the Nesterov's extrapolation technique, i.e.,
\begin{align*}
\textstyle \eta_k=\frac{\theta_{k-1}-1}{\theta_k},\quad \theta_{k+1}=\frac{1+\sqrt{1+4\theta_k^2}}{2},
\end{align*} with $\theta_{-1}=\theta_0=1$.
When $\eta_k=0$, we denote SPPDM as SPPD.

Define $\bar{x}=\frac{1}{N}\sum_{i=1}^Nx_i$. The stationarity error and consensus error are defined
below
\begin{align*}
\textmd{stationarity error} &= \|\bar{x}-\textmd{prox}_r[\bar{x}-\nabla f(\bar{x})]\|^2,\\
\textmd{consensus error}  &= \frac{1}{N}\sum_{i=1}^N\|x_i-\bar{x}\|^2.
\end{align*}
We run 10 independents trials for each algorithm with randomly
generated data and random initial values. The convergence curves
obtained by averaging over all 10 trials are plotted in Figs. \ref{fig:stationarity}-\ref{fig:consensus_full}.

In Fig. \ref{fig:stationarity}, we observe that the SPPDM, SPPM and STOC-ADMM all perform better than the PSGD in terms of stationarity error and consensus error. The reason is that these methods all use constant step sizes rather than the diminishing step size.  In addition, the proposed SPPDM has better performance than SPPD and STOC-ADMM, due to the Nesterov momentum.

The impact of the mini-batch size $|\mathcal{I}|$, and parameters  $\gamma$ and $c$ are analyzed in Fig.
\ref{fig:stationarity_parameter}. One can see that the larger mini-batch size
we use, the smaller error we can achieve, which corroborates Corollary \ref{conv to stationary sol}. With the same  mini-batch size, the larger values of $\gamma$ and $c$ correspond to smaller primal and dual step sizes. Thus, the SPPDM with larger values of $\gamma$ and $c$ has slower convergence; whereas as seen from the figures,
larger values of $\gamma$ and $c$ can lead to smaller stationarity error and consensus error.

For the offline setting, the comparison results of the proposed PPDM with the existing methods are shown in Fig. \ref{fig:consensus_full}.
It can be observed that the proposed PPDM enjoys the fastest convergence. Compared with the Prox-ADMM, it is clear to see the advantage of the PPDM with momentum for speeding up the algorithm convergence.
\begin{figure}[t]
\begin{center}
\includegraphics[width=1\linewidth]{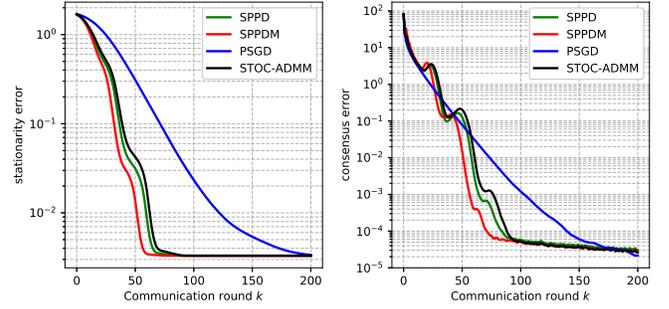}
\vspace{-0.8cm}
\caption{Comparison of proposed SPPDM with the PSGD and STOC-ADMM in terms of stationarity and consensus error; the batch size is $|\mathcal{I}|=100$. }
\label{fig:stationarity}
\end{center}
\end{figure}
\begin{figure}[t]
\begin{center}
\includegraphics[width=1\linewidth]{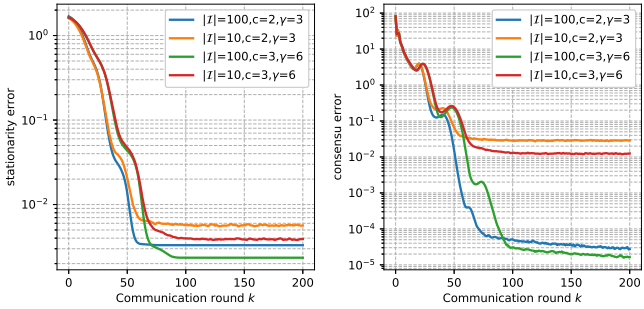}
\vspace{-0.8cm}
\caption{Comparison of the proposed SPPDM with different parameters in terms of stationarity and consensus error.}
\label{fig:stationarity_parameter}
\end{center}\vspace{-0.6cm}
\end{figure}
\begin{figure}[t]
\begin{center}
\includegraphics[width=1\linewidth]{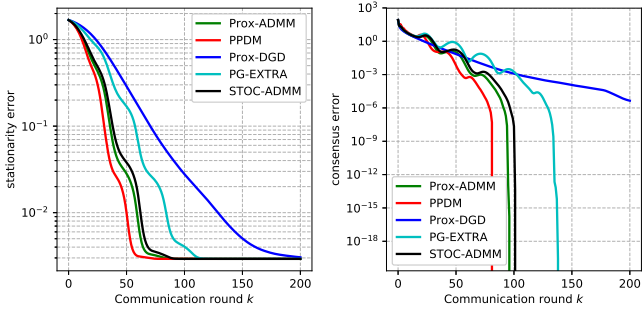}
\vspace{-0.8cm}
\caption{Comparison of proposed PPDM with the existing methods in terms of stationarity and consensus error.}
\label{fig:consensus_full}
\end{center}
\end{figure}
\subsection{Distributed Neural Network}
In this simulation, our  task is to classify handwritten digits from the MNIST dataset. The local loss function
$f_i(\theta_i)$ in each node is the cross-entropy function. In this example, we do not consider nonsmooth term and inequality constraint set. Thus, many existing methods, D-PSGD \cite{lian2017can}, D$^{2}$ \cite{tang2018d} and
PR-SGD-M \cite{yu2019linear}  can be applied to train a classification DNN.

Assume the neural network contains one hidden layer with 500 neurons. The $6\times 10^4$ training samples are divided into 10 subsets and assigned to the $N=10$ agents in two ways.
The first is the \emph{IID} case, where the samples are sufficiently shuffled, and then partitioned into 10 subsets with equal size ($m=6000$).
The second is the \emph{Non-IID} case, where we first sort the samples according to their labels, divide it into 20 shards of size 3000, and assign each of 10 agents 2 shards. Thus most agents have samples of two digits only.

The communication graph is also a circle.
We compare the SPPDM with the D-PSGD \cite{lian2017can}, D$^{2}$ \cite{tang2018d} and PR-SGD-M \cite{yu2019linear}.
The same mixing matrix in \eqref{eqn: W} is used for the three methods. Moreover, a fixed step size of  $\ell=0.05$ is used to ensure the convergence of these three methods in the simulation.
For the proposed SPPDM, we set parameter $c=1$, $\gamma=3$, $\alpha=0.001$, $\kappa=0.1$, $\beta=0.9$, and
$\eta_k=0.8$. 
The batch size is $|\mathcal{I}|=128$.

We calculate the loss value and the classification accuracy based on the average model $\bar{\theta}=1/N\sum_{i=1}^N\theta_i$.  Fig. \ref{fig:loss} and Fig. \ref{fig:acc} show the training loss and the classification accuracy for the  IID case and Non-IID case by averaging over all 5 trials, respectively.
From Fig. \ref{fig:loss},
we see that D$^2$ and D-PSGD have a similar performance; meanwhile, the proposed SPPD performs better than these two methods.
Besides, one can see that SPPDM and PR-SGD-M enjoy fast decreasing of the loss function and increasing of the classification accuracy, respectively. The reason is that both SPPDM and PR-SGD-M use the momentum technique. We should point out that the communication overhead of PR-SGD-M
is twice of the SPPDM since the PR-SGD-M requires
the agents to exchange not only the variable $x_i$ but also the momentum variables. Lastly,
comparing the SPPDM  with  SPPD, it shows again that the momentum techniques can accelerate the algorithm convergence.

From Fig. \ref{fig:acc} for the non-IID case, we can observe that the
D$^2$ performs better than D-PSGD and SPPD. In fact, by comparing
the curves in Fig. \ref{fig:loss}  with those in Fig.  \ref{fig:acc},
one can see that the
convergence curve of D$^2$ remains almost the same due to the use of the variance reduction technique
whereas D-PSGD and SPPD deteriorate under the non-IID setting.
As seen, the convergence of SPPDM is also slowed, but it still performs best among the methods under test.

\begin{figure}[t]
\begin{center}
\includegraphics[width=1\linewidth]{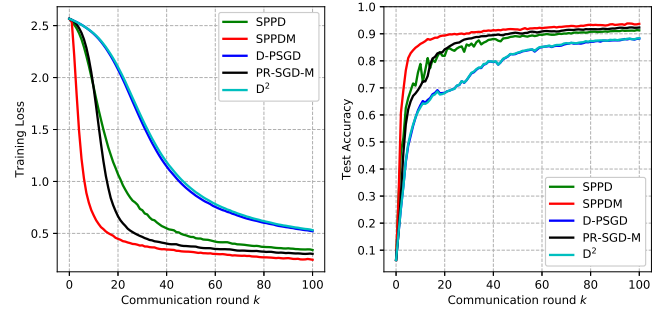}
\vspace{-0.8cm}
\caption{Comparison of proposed { SPPDM/SPPD} algorithms with different methods under the IID case.}
\label{fig:loss}
\end{center}\vspace{-0.7cm}
\end{figure}
\begin{figure}[t]
\begin{center}
\includegraphics[width=1\linewidth]{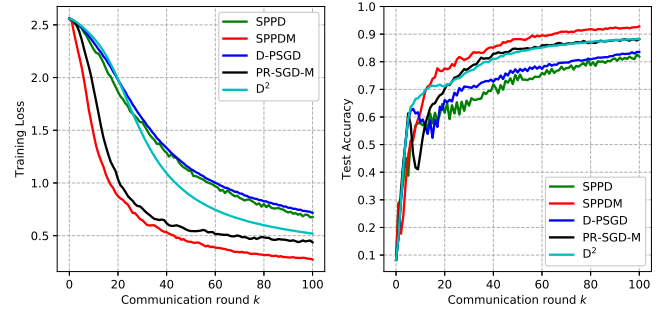}
\vspace{-0.8cm}
\caption{Comparison of proposed { SPPDM/SPPD} algorithms with different methods under the Non-IID case. }
\label{fig:acc}
\end{center}\vspace{-0.6cm}
\end{figure}

\section{Conclusion}\label{sec:conclusion}
In this paper, we have proposed a distributed stochastic proximal primal-dual algorithm with momentum for minimizing a non-convex and non-smooth function \eqref{eqn: dec_compact} over a connected multi-agent network.
We have shown (in Remark \ref{rmk1} and Remark \ref{rmk2}) that the proposed algorithm has a close connection with some of the existing algorithms that are for convex and smooth problems, and therefore can be regarded as an enhanced counterpart of these existing algorithms. Theoretically,  under Assumptions \ref{assu}-\ref{assu3}, we have built the convergence conditions of the proposed algorithms in Theorem \ref{The_conver} and Theorem \ref{thm PPDM}. In particular, we have shown that the proposed SPPDM can achieve an $\epsilon$-stationary solution  with $\mathcal{O}(1/\epsilon^2)$ computational complexity and $\mathcal{O}(1/\epsilon)$ communication complexity, where the latter is better than many of the existing methods which have  $\mathcal{O}(1/\epsilon^2)$ communication complexity (see Table 1). Experimental results have demonstrated that the proposed algorithms with momentum can effectively speed up the convergence. For distributed learning under non-IID data distribution (Fig. \ref{fig:acc}), we have also shown the proposed SPPDM performs better than the existing methods.

As future research directions, one may further relax Assumption \ref{assu3} to accommodate a larger class of regularization functions. Besides, it will be also interesting to investigate the  trade-off between the communication complexity (as measured by the number of bits exchanged) and the convergence. Moreover, we will analytically investigate how data distribution affect the algorithm convergence and improve the robustness of the algorithms against to unbalanced and non-IID data distribution in the future.

%
%

		\vspace{-0.0cm}
		\appendices

\section{Derivation of \eqref{eqn: new2}}\label{derivation of connection}
When we remove the non-smooth regularization term $r(\vx)$, the proximal gradient
update can be removed. Assume $\beta=1$, then \eqref{pro21} can be written as
\begin{align}
\nonumber\vx^{k+1} &= \Psib^{-1}\big(\gamma \vs^k+c\mB^{\top}\mB\vx^k+\kappa \vx^k
- \nabla f(\vs^k)-\mA^{\top}\vlambda^{k+1}\big)\\
\nonumber &=\Psib^{-1}\big( \tilde{\mW} \vs^k-\Psib^{-1}\nabla f(\vs^k)+\mC^{k}\big),
\end{align}
where
\begin{align}
\label{wb}\tilde \Wb&=c\Psib^{-1}\mB^{\top}\mB+(\gamma+\kappa)\Psib^{-1},\\
\label{Ck}\mC^{k} & = \Psib^{-1}(c\mB^{\top}\mB(\vx^k-\vs^k)+\kappa(\vx^k-\vs^k))-\Psib^{-1}\mA^{\top}\vlambda^{k+1}.
\end{align}
Using the definition of $\tilde \Ub=c\Psib^{-1}\mB^{\top}\mB$ in \eqref{U2},  we rewrite \eqref{wb} as
\begin{align*}
\tilde \Wb&=\tilde \Ub+\Psib^{-1}(\gamma+\kappa)\mI.
 \end{align*}
 According the definition of $\Psib=(\gamma+\kappa)\mI+c(\mA^{\top}\mA+\mB^T\mB)$ and $c=\alpha$, we have
 \begin{align}
\label{IW} \mI-\tilde \Wb=\Psib^{-1}\Psib-\tilde \Wb=\alpha \Psib^{-1}\mA^{\top}\mA.
 \end{align}
Based on
\begin{align}
\mA^{\top}\vlambda^{k+1}=\vp^{k+1}=\vp^{k}+\alpha\mA^{\top}\mA\vx^k,
\end{align}
if $\vp^{0}=0$, and applying \eqref{IW}, we obtain
\begin{align*}
\Psib^{-1}\mA^{\top}\vlambda^{k+1}=\sum_{t=0}^k\alpha\Psib^{-1}\mA^{\top}\mA\vx^t=\sum_{t=0}^k(\mI-\tilde \Wb)\vx^t.
\end{align*}
By substituting the above equality into \eqref{Ck}, we obtain \eqref{eqn: new2}.

\section{Proof of Theorem \ref{The_conver}}\label{appendix thm1}
Let us recapitulate the augmented Lagrange function in \eqref{equ_2_L} below
\begin{align}
\nonumber L_c(\vx,\vz;\vlambda)=&f(\vx)+r(\vx)+\langle \vlambda, \mA \vx \rangle\\
\label{equ_2_L2} &+ \frac{c}{2}\| \mA \vx\|^2+\frac{\kappa}{2}\|\vx-\vz\|^2.
\end{align}
We introduce some auxiliary functions as follows
\begin{align}
\label{dual}&d(\vz;\vlambda)=\min_{\vx}L_c(\vx,\vz;\vlambda)\\
\label{dual_opt}&\vx(\vz;\vlambda)=\mathop{\arg\min}_{\vx}L_c(\vx,\vz;\vlambda)\\
\label{proximal}&P(\vz)=\min_{\substack{\mA\vx=0}}f(\vx)+r(\vx)+\frac{\kappa}{2}\|\vx-\vz\|^2\\
\label{proximal_opt}&\vx(\vz)=\mathop{\arg\min}_{\substack{\mA\vx=0}}f(\vx)+r(\vx)+\frac{\kappa}
{2}\|\vx-\vz\|^2.
\end{align}
Besides, we define the full gradient iterate  $\hat{\vx}^{k+1}$ and $\hat{\vz}^{k+1}$,
\begin{align}
\label{hatx} \hat{\vx}^{k+1}&:=\mathop{\arg\min}_{\vx}g(\vx,\vw^k;\vlambda^{k+1})\\
\label{hatz} \hat{\vz}^{k+1}&:=\vz^k+\beta(\hat{\vx}^{k+1}-\vz^k),
\end{align}
where $\vw^k=[\vx^k,\vs^k,\vz^k]$ and
\begin{align}
\nonumber &g(\vx,\vw^k;\vlambda^{k+1})\\
\nonumber&=f(\vs^k)+\langle \nabla f(\vs^k),\vx-\vs^k\rangle+\frac{\gamma}{2}\|\vx-\vs^k\|^2+ r(\vx)\\
\label{fun_g1}&+\langle \vlambda^{k+1},\mA\vx\rangle+\frac{c}{2}\|\mA\vx\|^2+\frac{c}{2}\|\vx-\vx^k\|_{\mB^{\top}\mB}^2+\frac{\kappa}{2}\|\vx-\vz^k\|^2.
\end{align}
We also define
\begin{align*}
g(\vx,\vw^k,\vxi^k;\vlambda^{k+1})&:=g(\vx,\vx^k,\vs^k,\vz^k,\vxi^k;\vlambda^{k+1})
\end{align*} for \eqref{fun_g} at our disposal.

\subsection{Some Error Bounds}
Firstly,
we  show the upper bound between $\vx^{k+1}$ and $\hat{\vx}^{k+1}$.
\begin{lemma}\label{lemma_xx} Suppose Assumption \ref{assu3} holds, we have
	\begin{align}
	\label{xhatx}&\mathbb{E}[\|\vx^{k+1}-\hat{\vx}^{k+1}\|^2]\leq \frac{{ N\sigma^2}}{(\gamma+2c+\kappa)^2|\mathcal{I}|}.
	\end{align}
\end{lemma}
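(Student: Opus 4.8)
\textbf{Proof sketch for Lemma~\ref{lemma_xx}.} The plan is to exploit the fact that $\vx^{k+1}$ and $\hat{\vx}^{k+1}$ solve two strongly convex minimization problems whose objectives differ only in the linear term involving the gradient: $\vx^{k+1}$ minimizes $g(\vx,\vw^k,\vxi^k;\vlambda^{k+1})$ which uses the stochastic gradient $\bar G(\vs^k,\vxi^k)$, whereas $\hat{\vx}^{k+1}$ minimizes $g(\vx,\vw^k;\vlambda^{k+1})$ which uses the true gradient $\nabla f(\vs^k)$. Both surrogate functions are $(\gamma+2c+\kappa)$-strongly convex in $\vx$: the term $\frac{\gamma}{2}\|\vx-\vs^k\|^2$ contributes $\gamma$, the term $\frac{\kappa}{2}\|\vx-\vz^k\|^2$ contributes $\kappa$, and the quadratic penalty $\frac{c}{2}\|\mA\vx\|^2+\frac{c}{2}\|\vx-\vx^k\|_{\mB^\top\mB}^2$ contributes $c(\mA^\top\mA+\mB^\top\mB)=2c\mD\succeq 2c\mI$ since each $d_i\geq 1$. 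Hence $\psi_{\min}:=\gamma+2c+\kappa$ lower-bounds the strong convexity modulus uniformly.

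First I would write down the optimality conditions (in the form of the proximal-operator fixed-point identity, using that $r$ is convex and $\textmd{prox}$ is firmly nonexpansive, or equivalently a variational inequality with the subgradient of $r$). Subtracting the two optimality conditions, the only difference is the perturbation $\bar G(\vs^k,\vxi^k)-\nabla f(\vs^k)$ in the linear part. A standard argument — either invoking the $1/\mu$-Lipschitz dependence of the minimizer of a $\mu$-strongly convex function on a linear perturbation of its objective, or directly writing the two prox identities $\vx^{k+1}=\textmd{prox}_r^{\Psib}(\cdots-\bar G)$ and $\hat{\vx}^{k+1}=\textmd{prox}_r^{\Psib}(\cdots-\nabla f)$ and using nonexpansiveness of the prox in the $\Psib$-metric — gives
\begin{align}
\|\vx^{k+1}-\hat{\vx}^{k+1}\| \leq \frac{1}{\psi_{\min}}\|\bar G(\vs^k,\vxi^k)-\nabla f(\vs^k)\|. \notag
\end{align}
Squaring and taking expectations, it remains to bound $\mathbb{E}[\|\bar G(\vs^k,\vxi^k)-\nabla f(\vs^k)\|^2]$.

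For that last step I would use Assumption~\ref{assu3}: $\bar G(\vs^k,\vxi^k)=\frac{1}{|\mathcal{I}|}\sum_{j=1}^{|\mathcal{I}|}G(\vs^k,\xib_j^k)$ stacks the per-agent mini-batch estimates, each of which is unbiased for $\nabla f_i(s_i^k)$ with per-sample variance at most $\sigma^2$; averaging $|\mathcal{I}|$ i.i.d.\ samples reduces the variance to $\sigma^2/|\mathcal{I}|$ per agent, and summing over the $N$ agents (whose errors are independent across $i$, so the squared norms add) yields $\mathbb{E}[\|\bar G(\vs^k,\vxi^k)-\nabla f(\vs^k)\|^2]\leq N\sigma^2/|\mathcal{I}|$. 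Combining with the previous display gives exactly \eqref{xhatx}. I do not anticipate a serious obstacle here; the only point requiring a little care is making the strong-convexity-to-Lipschitz-stability step rigorous in the presence of the nonsmooth term $r$ — but this is handled cleanly by the firm nonexpansiveness of the proximal operator in the weighted norm induced by $\Psib$, whose smallest diagonal entry is precisely $\psi_{\min}=\gamma+2c+\kappa$ (using $d_i\geq 1$), which is also why the constant in the denominator is $(\gamma+2c+\kappa)^2$ rather than something depending on individual $\psi_i$.
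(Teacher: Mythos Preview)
Your proposal is correct and follows essentially the same route as the paper: write $\vx^{k+1}$ and $\hat{\vx}^{k+1}$ via the proximal-operator identities \eqref{pro21} and \eqref{pro211}, apply nonexpansiveness of the prox (with $\psi_i\geq \gamma+2c+\kappa$ since $d_i\geq 1$), and then invoke the mini-batch variance bound $\mathbb{E}[\|\bar G(\vs^k,\vxi^k)-\nabla f(\vs^k)\|^2]\leq N\sigma^2/|\mathcal{I}|$ from Assumption~\ref{assu3}. Your added remark explaining why the strong-convexity modulus (equivalently, the smallest diagonal entry of $\Psib$) equals $\gamma+2c+\kappa$ is a helpful clarification the paper leaves implicit.
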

\begin{proof}
	According to \eqref{unbiased}-\eqref{variance} in Assumption \ref{assu3}, we know
	\begin{align}
	\label{uv}\mathbb{E}\Big[\big\|\frac{1}{|\mathcal{I}|} \sum_{j=1}^{|\mathcal{I}|} G(\vs^k,\vxi_j^k)-\nabla f(\vx)\big\|^2\Big]\leq \frac{N}{|\mathcal{I}|}\sigma^2.
	\end{align}
	In addition, like \eqref{pro21}, the proximity form of \eqref{hatx} is
	\begin{align}
	\nonumber\hat{\vx}^{k+1} &= \textmd{prox}_{r}^{\Psib}\Big(\Psib^{-1}\big(\gamma \vs^k+c\mB^{\top}\mB\vx^k+\kappa \vz^k
	\\
	\label{pro211}&\qquad\qquad-\nabla f(\vs^k)-\mA^{\top}\vlambda^{k+1}\big)\Big).
	\end{align}
	Using \eqref{pro21}, \eqref{uv}-\eqref{pro211}  and applying the nonexpansive property of the proximal operator (see for example \cite[p. 340]{Rockafellar1970}) we then obtain \eqref{xhatx}.
\end{proof}

\begin{lemma}\label{primal_error_bound}
Suppose $\kappa>-\mu$. There exists some positive constants $\sigma_1$, $\sigma_2$  such that the following primal error bound holds
\begin{align}
\label{primal_error}
\!\!\!\!\|\vx^k-\vx(\vz^k;\vlambda^{k+1})\|\leq \sigma_1\|\vx^k-\hat{\vx}^{k+1}\|+\sigma_2\|\vx^k-\vs^k\|.
\end{align}
\end{lemma}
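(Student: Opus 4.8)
The plan is to exploit the strong convexity of the function $g(\vx,\vw^k;\vlambda^{k+1})$ in \eqref{fun_g1} and the strong convexity of $L_c(\cdot,\vz^k;\vlambda^{k+1})$ that is guaranteed by $\kappa>-\mu$, and then relate the two minimizers $\hat\vx^{k+1}$ and $\vx(\vz^k;\vlambda^{k+1})$ by a standard proximal-error-bound argument. First I would observe that, by \eqref{lower} in Assumption \ref{assu} together with $\kappa>-\mu$, the map $\vx\mapsto L_c(\vx,\vz^k;\vlambda^{k+1})$ is $(\mu+\kappa)$-strongly convex, so its minimizer $\vx(\vz^k;\vlambda^{k+1})$ is unique and satisfies the first-order optimality condition
\begin{align}
\nonumber \vzero\in \nabla f(\vx(\vz^k;\vlambda^{k+1}))+\partial r(\vx(\vz^k;\vlambda^{k+1}))+\mA^{\top}\vlambda^{k+1}+c\mA^{\top}\mA\vx(\vz^k;\vlambda^{k+1})+\kappa(\vx(\vz^k;\vlambda^{k+1})-\vz^k).
\end{align}
Likewise, $\hat\vx^{k+1}$ is the unique minimizer of the $(\gamma+\kappa)$-strongly convex surrogate $g(\cdot,\vw^k;\vlambda^{k+1})$, with its own optimality condition in which $\nabla f$ is replaced by the linearization $\nabla f(\vs^k)+\gamma(\cdot-\vs^k)$ and $c\mA^{\top}\mA$ is replaced by $c\mB^{\top}\mB$ acting on the differences relative to $\vx^k$.

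Next I would estimate $\|\vx^k-\vx(\vz^k;\vlambda^{k+1})\|$ by inserting $\hat\vx^{k+1}$ as an intermediate point and using the triangle inequality,
\begin{align}
\nonumber \|\vx^k-\vx(\vz^k;\vlambda^{k+1})\|\le \|\vx^k-\hat\vx^{k+1}\|+\|\hat\vx^{k+1}-\vx(\vz^k;\vlambda^{k+1})\|,
\end{align}
so it remains to bound the second term by a multiple of $\|\vx^k-\hat\vx^{k+1}\|+\|\vx^k-\vs^k\|$. For this I would subtract the two optimality conditions, use monotonicity of $\partial r$, and the strong convexity constant $\mu+\kappa$ of $L_c$ to move the subdifferential term to the left-hand side; the residual terms are exactly the discrepancies between the surrogate and the true objective, namely $\nabla f(\vx(\vz^k;\vlambda^{k+1}))-\nabla f(\vs^k)-\gamma(\hat\vx^{k+1}-\vs^k)$ and $c(\mA^{\top}\mA-\mB^{\top}\mB)$-type terms evaluated at $\hat\vx^{k+1}$, $\vx^k$. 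Using Lipschitz continuity of $\nabla f$ \eqref{Lip} and $\mA^{\top}\mA+\mB^{\top}\mB=2\mD$, each such residual is bounded by a constant times $\|\hat\vx^{k+1}-\vx^k\|+\|\vx^k-\vs^k\|$ (possibly after one more triangle-inequality split, since $\|\hat\vx^{k+1}-\vs^k\|\le\|\hat\vx^{k+1}-\vx^k\|+\|\vx^k-\vs^k\|$). Collecting constants and absorbing them into $\sigma_1,\sigma_2$ (which depend only on $L,\gamma,c,\kappa,\mu,\sigma_A$ and the degree matrix) yields \eqref{primal_error}.

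The main obstacle I anticipate is handling the two different quadratic coupling terms carefully: the true Lagrangian penalty uses $c\mA^{\top}\mA$ while the surrogate uses $c\mB^{\top}\mB\|\vx-\vx^k\|^2$, so when subtracting the optimality conditions one must track the cross term $c(\mB^{\top}\mB-\mA^{\top}\mA)\vx^k$ and show it reduces, via $\mA^{\top}\mA+\mB^{\top}\mB=2\mD$, to something controllable by $\|\vx^k-\hat\vx^{k+1}\|$ and $\|\vx^k-\vs^k\|$ rather than by an absolute constant. A secondary technical point is confirming that the strong-convexity modulus $\mu+\kappa>0$ is genuinely what governs the contraction even though $\mu$ may be negative; this is precisely where the hypothesis $\kappa>-\mu$ is used, and it must be invoked explicitly to guarantee that the denominator in the error bound is positive so that $\sigma_1,\sigma_2$ are finite.
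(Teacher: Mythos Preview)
Your approach is correct and takes a genuinely different route from the paper. The paper does \emph{not} compare the two minimizers $\hat\vx^{k+1}$ and $\vx(\vz^k;\vlambda^{k+1})$ directly. Instead, it invokes the a~posteriori error bound of Pang \cite{pang1987posteriori}: since $L_c(\cdot,\vz^k;\vlambda^{k+1})$ is $(\kappa+\mu)$-strongly convex with $(\kappa+L+c\sigma_A^2)$-Lipschitz gradient in the smooth part, one has
\[
\|\vx^k-\vx(\vz^k;\vlambda^{k+1})\|\le \varrho\,\|\tilde\nabla_{\vx}L_c(\vx^k,\vz^k;\vlambda^{k+1})\|,\qquad \varrho=\tfrac{\kappa+L+c\sigma_A^2+1}{\kappa+\mu},
\]
where $\tilde\nabla_{\vx}L_c$ is the proximal gradient. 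The paper then bounds that proximal gradient at $\vx^k$ by inserting $\hat\vx^{k+1}$ (a fixed point of $\mathrm{prox}_r^{\Psib}$ for $g$) and using the nonexpansiveness of the proximal map, obtaining explicit constants $\sigma_1=\varrho(2+2cd_{\max}+\gamma+\kappa)$ and $\sigma_2=\varrho(\gamma+L)$. Your monotonicity argument is more elementary---it avoids the external error-bound theorem and works purely from the first-order conditions---while the paper's route cleanly factorizes the bound into (i) a property of $L_c$ alone and (ii) a proximal-gradient residual, which makes the constants easy to read off.

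One small correction to your anticipated obstacle: the surrogate $g$ in \eqref{fun_g1} retains the term $\tfrac{c}{2}\|\mA\vx\|^2$ \emph{and adds} $\tfrac{c}{2}\|\vx-\vx^k\|_{\mB^\top\mB}^2$; it does not replace one by the other. Consequently, when you subtract the optimality conditions, the $c\mA^\top\mA$ contributions give the nonnegative term $c\|\mA(\vx(\vz^k;\vlambda^{k+1})-\hat\vx^{k+1})\|^2$ on the strongly-convex side, and the only extra residual is $c\mB^\top\mB(\hat\vx^{k+1}-\vx^k)$, which is already controlled by $\|\hat\vx^{k+1}-\vx^k\|$. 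No cross term $c(\mB^\top\mB-\mA^\top\mA)\vx^k$ appears, and the identity $\mA^\top\mA+\mB^\top\mB=2\mD$ is not needed here. Also, after you extract the $(\mu+\kappa)$ modulus, the gradient residual should read $\nabla f(\hat\vx^{k+1})-\nabla f(\vs^k)-\gamma(\hat\vx^{k+1}-\vs^k)$ (not $\nabla f(\vx(\vz^k;\vlambda^{k+1}))-\nabla f(\vs^k)-\cdots$); otherwise an $L\|\vx(\vz^k;\vlambda^{k+1})-\hat\vx^{k+1}\|$ term would reappear on the right and would need $\kappa+\mu>L$, which is not assumed.
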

\begin{proof}
Based on $\kappa>-\mu$, we know that $L_c$ in \eqref{equ_2_L2} is strongly convex in $\vx$ with modulus $\kappa+\mu$ and Lipschitz constant $\kappa+L+c\sigma_A^2$, where $\sigma_A$ is the spectral norm of the
matrix. Thus, we can apply  \cite[Theorem 3.1]{pang1987posteriori} to upper bound the distance between $\vx^k$ and the optimal solution $\vx(\vz^k;\vlambda^{k+1})$
\begin{align}
\label{primal_bound} &\|\vx^k-\vx(\vz^k;\vlambda^{k+1})\|\leq \varrho\|\tilde{\nabla}_{\vx}L_c(\vx^k,\vz^k;\vlambda^{k+1})\|,
\end{align}
where $\varrho=\frac{\kappa+L+c\sigma_A^2+1}{\kappa+\mu}$ and
\begin{align}
\nonumber &\tilde{\nabla}_{\vx}L_c(\vx,\vz;\vlambda)=\vx-\textmd{prox}_{r}^{\Psib}(\vx-\nabla_{\vx}(L_c(\vx,\vz;\vlambda)-r(\vx)))
\end{align} is known as the proximal gradient.

We can bound $\|\tilde{\nabla}_{\vx}L_c(\vx^k,\vz^k;\vlambda^{k+1})\|$ as follows
\begin{align}
\nonumber&\|\tilde{\nabla}_{\vx}L_c(\vx^k,\vz^k;\vlambda^{k+1})\|\\
\nonumber&=\|\vx^k-\textmd{prox}_{r}^{\Psib}(\vx^k-\nabla_{\vx}(L_c(\vx^k,\vz^k;\vlambda^{k+1})-r(\vx^k)))\|\\
\nonumber&\leq\|\vx^k-\hat{\vx}^{k+1}\|\\
\nonumber&~~~+\|\hat{\vx}^{k+1}-\textmd{prox}_{r}^{\Psib}(\vx^k-\nabla_{\vx}(L_c(\vx^k,\vz^k;\vlambda^{k+1})-r(\vx^k)))\|\\
\nonumber&=\|\vx^k-\hat{\vx}^{k+1}\|\\
\nonumber&~~~+\Big\|\textmd{prox}_{r}^{\Psib}(\hat{\vx}^{k+1}-\nabla_{\vx}(g(\hat{\vx}^{k+1},\vw^k;\vlambda^{k+1})-r(\hat{\vx}^{k+1})))\\
\nonumber&~~~-\textmd{prox}_{r}^{\Psib}(\vx^k-\nabla_{\vx}(L_c(\vx^k,\vz^k;\vlambda^{k+1})-r(\vx^k)))\Big\|\\
\nonumber&\leq (2+2cd_{\max}+\gamma+\kappa)\|\vx^k-\hat{\vx}^{k+1}\|+(\gamma+L)\|\vx^k-\vs^k\|,
\end{align}
where $d_{\max}=\max\{d_1,\ldots,d_N\}$; the second equality is obtained by using the optimality condition of $\hat{\vx}^{k+1}$ in \eqref{hatx}, and the second inequality is  based  on the nonexpansive property of the proximal operator. Denote
\begin{align}
\label{eqn: sigma1}\sigma_1&=\varrho(2+2cd_{\max}+\gamma+\kappa),\\
\label{eqn: sigma2}\sigma_2&=\gamma+L.
\end{align}
The proof is complete.
\end{proof}
\begin{lemma} \label{lemma_error_bound}
(Lemma 3.2 in \cite{zhang2018proximal})
Suppose $\kappa>-\mu$, and Assumption \ref{assu} holds. There exists some positive constants $\sigma_3,\sigma_4$ such that the following error bounds hold
\begin{align}
\label{v_lemma}\|\vlambda_1-\vlambda_2\|&\geq\sigma_3\|\vx(\vz;\vlambda_1)-\vx(\vz;\vlambda_2)\|\\
\label{z_lemma}\|\vz_1-\vz_2\|&\geq \sigma_4\|\vx(\vz_1;\vlambda)-\vx(\vz_2;\vlambda)\|,
\end{align}
where
\begin{align}
\label{eqn: sigma3}
\sigma_3&=(\kappa+\mu)/\sigma_A\\
\label{eqn: sigma4}\sigma_4&=(\kappa+\mu)/\kappa.
\end{align}
\end{lemma}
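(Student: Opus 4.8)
The plan is to derive both inequalities directly from the first-order optimality conditions of the inner minimization \eqref{dual_opt}, exploiting that, under $\kappa>-\mu$ together with Assumption \ref{assu}, the function $L_c(\cdot,\vz;\vlambda)$ in \eqref{equ_2_L2} is strongly convex in $\vx$ with modulus $\kappa+\mu$. Indeed, by \eqref{lower} the smooth part $f$ contributes curvature at least $\mu$, the proximal term $\frac{\kappa}{2}\|\vx-\vz\|^2$ contributes $\kappa$, while $r(\vx)$, $\langle\vlambda,\mA\vx\rangle$ and $\frac{c}{2}\|\mA\vx\|^2$ are convex; hence $\vx(\vz;\vlambda)$ is the unique minimizer, characterized by the existence of a subgradient $\vg\in\partial r(\vx(\vz;\vlambda))$ with
\begin{align}\label{opt_cond_plan}
\nabla f(\vx(\vz;\vlambda))+\vg+\mA^{\top}\vlambda+c\,\mA^{\top}\mA\,\vx(\vz;\vlambda)+\kappa\big(\vx(\vz;\vlambda)-\vz\big)=\zerob .
\end{align}
No constraint qualification is needed here, since this is an unconstrained composite minimization with $r$ closed proper convex.

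To obtain \eqref{v_lemma}, I would write \eqref{opt_cond_plan} at $(\vz,\vlambda_1)$ and at $(\vz,\vlambda_2)$, call the minimizers $\vx_1,\vx_2$ and the associated subgradients $\vg_1,\vg_2$, subtract the two identities, and pair the result with $\vx_1-\vx_2$. Three elementary facts then finish it: (i) $\langle\nabla f(\vx_1)-\nabla f(\vx_2),\vx_1-\vx_2\rangle\ge\mu\|\vx_1-\vx_2\|^2$, obtained by adding \eqref{lower} to itself with the two points interchanged; (ii) $\langle\vg_1-\vg_2,\vx_1-\vx_2\rangle\ge0$ by monotonicity of $\partial r$; and (iii) $c\|\mA(\vx_1-\vx_2)\|^2\ge0$. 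Collecting terms gives $(\kappa+\mu)\|\vx_1-\vx_2\|^2\le-\langle\vlambda_1-\vlambda_2,\mA(\vx_1-\vx_2)\rangle\le\sigma_A\|\vlambda_1-\vlambda_2\|\,\|\vx_1-\vx_2\|$, i.e.\ \eqref{v_lemma} with $\sigma_3=(\kappa+\mu)/\sigma_A$. For \eqref{z_lemma} the computation is identical, except \eqref{opt_cond_plan} is now evaluated at $(\vz_1,\vlambda)$ and $(\vz_2,\vlambda)$: the $\mA^{\top}\vlambda$ terms cancel on subtraction and the proximal terms leave $-\kappa(\vz_1-\vz_2)$, so $(\kappa+\mu)\|\vx_1-\vx_2\|^2\le\kappa\langle\vz_1-\vz_2,\vx_1-\vx_2\rangle\le\kappa\|\vz_1-\vz_2\|\,\|\vx_1-\vx_2\|$, which yields \eqref{z_lemma} with $\sigma_4=(\kappa+\mu)/\kappa$.

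I do not expect a genuine obstacle. The only points requiring mild care are the validity of \eqref{opt_cond_plan} (routine, given that $r$ is closed proper convex and the remaining terms are smooth) and the bookkeeping for a possibly negative $\mu$, where the hypothesis $\kappa>-\mu$ is used precisely to keep the prefactor $\kappa+\mu$ strictly positive so that the bounds are non-vacuous. Since this statement is exactly Lemma 3.2 of \cite{zhang2018proximal}, an alternative is simply to invoke that reference, recording the argument above for completeness.
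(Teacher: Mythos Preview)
Your proposal is correct. The paper does not supply its own proof of this lemma; it simply cites \cite{zhang2018proximal} (Lemma~3.2 there), so your argument via the optimality condition \eqref{opt_cond_plan} and the $(\kappa+\mu)$-strong convexity of $L_c(\cdot,\vz;\vlambda)$ is precisely the standard derivation behind that reference, and you already note that one may alternatively just invoke the citation.
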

\begin{lemma}\label{dual_error_bound}
Suppose that  Assumptions \ref{assu}-\ref{assu2} hold and $\kappa>\mu$. Then, there exist some positive scalars  $\sigma_5$, $\Delta$ such that the following dual error bound holds
	\begin{align}
	\label{dual_lemma}\|\vx(\vz,\vlambda)-\vx(\vz)\|\leq \sigma_5\|\mA\vx(\vz;\vlambda)\|, ~for~ any ~\vz, ~\vlambda.
	\end{align}
where $\sigma_5$ depends only on the constants $L,\kappa,\sigma_A,\mu$ and the matrices $\mA, \mS_x, \mS_y$.
\end{lemma}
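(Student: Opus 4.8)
The plan is to realize $\vx(\vz;\vlambda)$ as the minimizer of the \emph{same} strongly convex program that defines $\vx(\vz)$, only with the consensus constraint $\mA\vx=\vzero$ replaced by $\mA\vx=\vb$ for a well-chosen $\vb$, and then to bound the solution of this perturbed program linearly in $\|\vb\|$ using the polyhedral structure of Assumption \ref{assu2}. For $\vb\in\range(\mA)$ put $\vx(\vz;\vb):=\arg\min\{f(\vx)+r(\vx)+\tfrac{\kappa}{2}\|\vx-\vz\|^{2}:\ \mA\vx=\vb\}$, which is well defined and unique since $\kappa>-\mu$ makes the objective strongly convex with modulus $\kappa+\mu$; clearly $\vx(\vz;\vzero)=\vx(\vz)$. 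With $\vb^{\star}:=\mA\,\vx(\vz;\vlambda)$ I would first check that $\vx(\vz;\vlambda)=\vx(\vz;\vb^{\star})$: the optimality condition of $\vx(\vz;\vlambda)=\arg\min_{\vx}L_c(\vx,\vz;\vlambda)$ is
\[
\vzero\in\partial r(\vx(\vz;\vlambda))+\nabla f(\vx(\vz;\vlambda))+\kappa(\vx(\vz;\vlambda)-\vz)+\mA^{\top}\big(\vlambda+c\,\mA\vx(\vz;\vlambda)\big),
\]
which is precisely the KKT stationarity of $\min\{f(\vx)+r(\vx)+\tfrac{\kappa}{2}\|\vx-\vz\|^{2}:\ \mA\vx=\vb^{\star}\}$ with equality multiplier $\vlambda+c\,\mA\vx(\vz;\vlambda)$; feasibility $\mA\vx(\vz;\vlambda)=\vb^{\star}$ holds by definition, and since that program is convex, its KKT point is the unique minimizer, so $\vx(\vz;\vlambda)=\vx(\vz;\vb^{\star})$.

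The heart of the argument is then a uniform Lipschitz stability estimate: I would produce a constant $\sigma_5>0$ depending only on $L,\kappa,\mu,\sigma_A$ and the polyhedral data $\mA,\mS_{x},\mS_{y}$ — and \emph{not} on $\vz$ or $\vlambda$ — such that $\|\vx(\vz;\vb)-\vx(\vz;\vzero)\|\le\sigma_5\|\vb\|$ for every $\vz$ and every $\vb\in\range(\mA)$. Using Assumption \ref{assu2} I would rewrite the program in the polyhedral form \eqref{eqn: dec_proximal1} with $\tfrac{\kappa}{2}\|\vx-\vz\|^{2}$ added to the objective; then $\vx(\vz;\vb)$, together with an auxiliary epigraph variable $\vy$ and the multipliers, satisfies a KKT system whose only varying datum is the right-hand side $\vb$ (the inequalities $\mS_{x}\vx+\mS_{y}\vy\ge\vzeta$ and the vector $\vzeta$ being fixed). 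Since the $\vx$-stationarity differs from an affine polyhedral inclusion only through the $L$-Lipschitz term $\nabla f$, while the proximal term makes the system strongly monotone in $\vx$ with modulus $\kappa+\mu$ uniformly in $\vz$, a Hoffman/Robinson-type error bound for this KKT system — the machinery already underlying Lemma \ref{lemma_error_bound}, cf.\ \cite{pang1987posteriori,zhang2018proximal} — gives the claimed linear dependence of the unique $\vx$-component on $\vb$, with a constant of the asserted form. Non-uniqueness of the inequality multiplier and of the equality multiplier (the latter because $\mA$ is rank deficient) is harmless, since only $\vx$ is being estimated and the equality multiplier may be taken in $\range(\mA)$. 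Combining with the previous paragraph, $\|\vx(\vz;\vlambda)-\vx(\vz)\|=\|\vx(\vz;\vb^{\star})-\vx(\vz;\vzero)\|\le\sigma_5\|\vb^{\star}\|=\sigma_5\|\mA\vx(\vz;\vlambda)\|$, which is \eqref{dual_lemma}.

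The main obstacle is this last estimate — specifically, obtaining a \emph{linear} (not merely H\"older) rate with a constant uniform over all $\vz$. A purely strong-convexity computation, adding the two first-order inequalities at $\vx(\vz;\vlambda)$ and at $\vx(\vz)$, only delivers $(\kappa+\mu)\|\vx(\vz;\vlambda)-\vx(\vz)\|^{2}\le\langle\nub-\vlambda^{+},\,\mA\vx(\vz;\vlambda)\rangle$, where $\vlambda^{+}=\vlambda+c\,\mA\vx(\vz;\vlambda)$ and $\nub$ is an equality multiplier at $\vx(\vz)$; this yields only $\|\vx(\vz;\vlambda)-\vx(\vz)\|=\mathcal{O}(\|\mA\vx(\vz;\vlambda)\|^{1/2})$ unless $\|\nub-\vlambda^{+}\|$ can be bounded linearly by $\|\vx(\vz;\vlambda)-\vx(\vz)\|$, which fails because $\partial r$ can jump across the facets of the epigraphs. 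Restoring the full linear rate is exactly where the polyhedral structure of Assumption \ref{assu2} (through $\mS_{x},\mS_{y}$) must be brought in, via the Hoffman-type bound; carrying out that estimate carefully, with explicit constants, is the bulk of the work.
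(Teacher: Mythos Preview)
Your proposal is correct and follows essentially the same route the paper points to: the paper does not give a self-contained argument but defers to \cite{zhang2020} (and the supplementary material), where the dual error bound is obtained precisely by (i) recognizing $\vx(\vz;\vlambda)$ as the unique minimizer of the constrained program $\min\{f(\vx)+r(\vx)+\tfrac{\kappa}{2}\|\vx-\vz\|^2:\ \mA\vx=\vb\}$ at $\vb=\mA\vx(\vz;\vlambda)$, and (ii) invoking a Hoffman/Robinson-type polyhedral error bound on the KKT system to get Lipschitz dependence of the $\vx$-component on $\vb$, uniformly in $\vz$. Your identification of the obstacle --- that strong convexity alone gives only an $\mathcal{O}(\|\mA\vx(\vz;\vlambda)\|^{1/2})$ rate and that the polyhedral epigraph hypothesis (Assumption~\ref{assu2}) is exactly what upgrades this to linear --- is the same diagnosis that motivates the machinery in \cite{zhang2020}.
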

\begin{proof}
{ The lemma is an extension of \cite[Lemma 3.2]{zhang2020}, where the non-smooth term $r(\vx)$ of \eqref{eqn: dec_compact} is limited to an indicator function of a polyhedral set. Due to limited space, the proof details are relegated to the supplementary document \cite{Wang2020Supplementary}.
}	
\end{proof}

\subsection{Decent Lemmas}
In order to show  the convergence of Algorithm \ref{alg_ADMM2}, we consider a new potential function,
\begin{align}
\nonumber\mathbb{E}[\phi^{k+1}]&\triangleq\mathbb{E}[L_c(\vx^{k+1},\vz^{k+1};\vlambda^{k+1}) +\tau\|\vx^{k+1}-\vx^{k}\|^2 ]\\
\label{potential}&~~~+\mathbb{E}[2P(\vz^{k+1}) -2 d(\vz^{k+1};\vlambda^{k+1})],
\end{align}
for some $\tau >0$.
By the weak duality, we have
\begin{align}
\label{weak_dual}L_c(\vx,\vz;\vlambda)\geq d(\vz;\vlambda), P(\vz)\geq d(\vz;\vlambda).
\end{align}
Thus, we have $\mathbb{E}[\phi^{k}]\geq \mathbb{E}[P(\vz^{k})]$. According to the definition of  $P(\vz^k)$ in \eqref{proximal} and Assumption \ref{assu} (ii), we obtain $P(\vz^k)\geq \underline{f}$. As a result, $\mathbb{E}[\phi^{k}]$  is bounded below by $\underline{f}$.
\begin{lemma}\label{lemma1}
For a sequence $\{\vx^k,\vz^k,\vlambda^k\}$ generated by Algorithm \ref{alg_ADMM2}, if $\kappa>-\mu$, $\gamma>3L$, $0<\beta<1$ and
\begin{align}
\label{etak2}0 \leq \eta_k\leq\sqrt{\frac{\kappa+2c+\gamma-3L}{2(\gamma-\mu+3L)}} :=\bar \eta,
\end{align}
there exit some positive constants $\tau$, $\hat{\sigma}_1$ and $\hat{\sigma}_2$ such that
\begin{align}
\label{eqn: hat_sigma1}\hat{\sigma}_1&\triangleq\frac{\kappa+2c+\gamma-3L}{2}-2\tau\geq0\\
\label{eqn: hat_sigma2}\hat{\sigma}_2& \triangleq\frac{\mu-\gamma-3L}{2}{\bar \eta}^2+\tau\geq0,
\end{align}
then
\begin{align}
\nonumber &\mathbb{E}[L_c(\vx^k,\vz^k;\vlambda^k)+\tau\|\vx^{k}-\vx^{k-1}\|^2]\\
\nonumber &~~~ -\mathbb{E}[L_c(\vx^{k+1},\vz^{k+1};\vlambda^{k+1})-\tau\|\vx^{k+1}-\vx^{k}\|^2]\\
\nonumber&\geq-\alpha \mathbb{E}[\|\mA \vx^k\|^2]+\frac{\kappa}{2\beta}\mathbb{E}[\|\vz^k-\vz^{k+1}\|^2]\\
\nonumber&~~~+ \hat{\sigma}_1\mathbb{E}[\|\vx^k-\hat{\vx}^{k+1}\|^2]+\hat{\sigma}_2\mathbb{E}[\|\vx^k-\vx^{k-1}\|^2]\\
\label{Lall} & ~~~ -\left(\frac{1}{2\gamma}+\frac{3L+4\tau}{2(\gamma+2c+\kappa)^2}\right)\frac{N\sigma^2}{|\mathcal{I}|}.
\end{align}
\end{lemma}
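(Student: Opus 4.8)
The plan is to bound the one-step change of $L_c(\vx^k,\vz^k;\vlambda^k)$ by splitting it along the three updates of Algorithm~\ref{alg_ADMM2}:
\begin{align*}
&L_c(\vx^k,\vz^k;\vlambda^k)-L_c(\vx^{k+1},\vz^{k+1};\vlambda^{k+1})
=\langle\vlambda^k-\vlambda^{k+1},\mA\vx^k\rangle\\
&\quad+\big[L_c(\vx^k,\vz^k;\vlambda^{k+1})-L_c(\vx^{k+1},\vz^k;\vlambda^{k+1})\big]
+\big[L_c(\vx^{k+1},\vz^k;\vlambda^{k+1})-L_c(\vx^{k+1},\vz^{k+1};\vlambda^{k+1})\big].
\end{align*}
By the dual update $\vlambda^{k+1}=\vlambda^k+\alpha\mA\vx^k$, the first term is exactly $-\alpha\|\mA\vx^k\|^2$, which supplies the $-\alpha\mathbb{E}[\|\mA\vx^k\|^2]$ in \eqref{Lall}. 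The last term depends on $\vz$ only through $\tfrac{\kappa}{2}\|\vx^{k+1}-\vz\|^2$, so substituting $\vz^{k+1}=(1-\beta)\vz^k+\beta\vx^{k+1}$ gives $\tfrac{\kappa(2-\beta)}{2\beta}\|\vz^{k+1}-\vz^k\|^2\ge\tfrac{\kappa}{2\beta}\|\vz^{k+1}-\vz^k\|^2$ since $0<\beta<1$. Hence only the middle (primal) bracket requires real work, and everything below is read in expectation at the end.

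For the primal bracket I would route through the virtual full-gradient iterate $\hat\vx^{k+1}$ of \eqref{hatx}, splitting it as $[L_c(\vx^k,\vz^k;\vlambda^{k+1})-L_c(\hat\vx^{k+1},\vz^k;\vlambda^{k+1})]+[L_c(\hat\vx^{k+1},\vz^k;\vlambda^{k+1})-L_c(\vx^{k+1},\vz^k;\vlambda^{k+1})]$. The surrogate $G(\cdot):=g(\cdot,\vw^k;\vlambda^{k+1})$ is $(\gamma+2c+\kappa)$-strongly convex — its smooth-part Hessian is $(\gamma+\kappa)\mI+c(\mA^\top\mA+\mB^\top\mB)=(\gamma+\kappa)\mI+2c\mD\succeq(\gamma+2c+\kappa)\mI$ because $d_i\ge1$ — and is minimized at $\hat\vx^{k+1}$, so $G(\vx^k)-G(\hat\vx^{k+1})\ge\tfrac{\gamma+2c+\kappa}{2}\|\vx^k-\hat\vx^{k+1}\|^2$; moreover the gap $G(\vx)-L_c(\vx,\vz^k;\vlambda^{k+1})=f(\vs^k)+\langle\nabla f(\vs^k),\vx-\vs^k\rangle-f(\vx)+\tfrac{\gamma}{2}\|\vx-\vs^k\|^2+\tfrac{c}{2}\|\vx-\vx^k\|_{\mB^\top\mB}^2$ lies, by the $L$-smoothness upper bound and the $\mu$-lower bound \eqref{lower} on the $f$-part, between $\tfrac{\gamma-L}{2}\|\vx-\vs^k\|^2+\tfrac{c}{2}\|\vx-\vx^k\|_{\mB^\top\mB}^2$ and $\tfrac{\gamma-\mu}{2}\|\vx-\vs^k\|^2+\tfrac{c}{2}\|\vx-\vx^k\|_{\mB^\top\mB}^2$. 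Bounding the gap at $\vx^k$ by the upper side (where the $\mB^\top\mB$-term vanishes), dropping the nonnegative lower side at $\hat\vx^{k+1}$ (legitimate since $\gamma>3L>L$), and using $\|\vx^k-\vs^k\|^2=\eta_k^2\|\vx^k-\vx^{k-1}\|^2$, the strong-convexity bound becomes
\begin{align*}
L_c(\vx^k,\vz^k;\vlambda^{k+1})-L_c(\hat\vx^{k+1},\vz^k;\vlambda^{k+1})\ \ge\ \tfrac{\gamma+2c+\kappa}{2}\|\vx^k-\hat\vx^{k+1}\|^2-\tfrac{\gamma-\mu}{2}\,\eta_k^2\|\vx^k-\vx^{k-1}\|^2 .
\end{align*}
This is where the Nesterov extrapolation enters the estimate, and hence why the restriction $\eta_k\le\bar\eta$ is needed.

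For the remaining ``stochastic perturbation'' $L_c(\hat\vx^{k+1},\vz^k;\vlambda^{k+1})-L_c(\vx^{k+1},\vz^k;\vlambda^{k+1})$ one cannot simply invoke unbiasedness, because $\vx^{k+1}$ and $\hat\vx^{k+1}$ are produced by the same \emph{nonlinear} map $\textmd{prox}_r^{\Psib}$ applied to arguments differing by $\Psib^{-1}(\bar G(\vs^k,\vxi^k)-\nabla f(\vs^k))$. Instead I would combine the strong convexity of $L_c(\cdot,\vz^k;\vlambda^{k+1})$ (modulus $\kappa+\mu>0$) with the optimality condition of $\vx^{k+1}$ for the stochastic surrogate $g(\cdot,\vw^k,\vxi^k;\vlambda^{k+1})$, split every inner product against the noise $e^k:=\bar G(\vs^k,\vxi^k)-\nabla f(\vs^k)$ by Young's inequality with weight $\gamma$, and bound the residual $\|\vx^{k+1}-\hat\vx^{k+1}\|^2$ via Lemma~\ref{lemma_xx} (whose proof in fact gives the pointwise estimate $\|\vx^{k+1}-\hat\vx^{k+1}\|\le(\gamma+2c+\kappa)^{-1}\|e^k\|$). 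Conditioning on the $\sigma$-field $\mathcal{F}^k$ generated by $\{\vxi^0,\dots,\vxi^{k-1}\}$ — under which $\vx^k,\vs^k,\vz^k,\vlambda^{k+1},\hat\vx^{k+1}$ are measurable, so that every cross term $\langle e^k,(\text{deterministic quantity})\rangle$ has zero conditional mean — using $\mathbb{E}[\|e^k\|^2\mid\mathcal{F}^k]\le N\sigma^2/|\mathcal{I}|$ from Assumption~\ref{assu3}, and finally taking total expectation, one obtains precisely the noise term $-\big(\tfrac{1}{2\gamma}+\tfrac{3L+4\tau}{2(\gamma+2c+\kappa)^2}\big)\tfrac{N\sigma^2}{|\mathcal{I}|}$ of \eqref{Lall}: the $\tfrac{1}{2\gamma}$ from the Young split of the terms linear in $e^k$, and the $(\gamma+2c+\kappa)^{-2}$-weighted piece from Lemma~\ref{lemma_xx} (the constant $3L$ from the smoothness slacks incurred in this perturbation step, $4\tau$ from converting $\|\vx^{k+1}-\vx^k\|^2$ into $\|\hat\vx^{k+1}-\vx^k\|^2$ inside the momentum term).

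It then remains to collect the three brackets, add $\tau\|\vx^k-\vx^{k-1}\|^2$ and $\tau\|\vx^{k+1}-\vx^k\|^2$, replace $\eta_k$ by its bound $\bar\eta$, and read off the coefficients: that of $\|\vx^k-\hat\vx^{k+1}\|^2$ reduces to $\hat\sigma_1=\tfrac{\kappa+2c+\gamma-3L}{2}-2\tau$ and that of $\|\vx^k-\vx^{k-1}\|^2$ to $\hat\sigma_2=\tfrac{\mu-\gamma-3L}{2}\bar\eta^2+\tau$, both nonnegative precisely for $\tau$ in the interval $\big[\tfrac{\gamma+3L-\mu}{2}\bar\eta^2,\ \tfrac{\kappa+2c+\gamma-3L}{4}\big]$, which is nonempty exactly because condition \eqref{etak2} says its left endpoint does not exceed its right one; choosing any such $\tau$ finishes the proof. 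I expect the main obstacle to be exactly this simultaneous bookkeeping of the two leading quadratic coefficients and the noise: momentum pushes $\tau$ up, the strong-convexity budget pushes it down, and the detour through $\hat\vx^{k+1}$ erodes both budgets (as well as the coefficient of $\|\vx^{k+1}-\vx^k\|^2$), so the $L$-smoothness slacks (the ``$3L$'') and the Young weights must be tracked tightly enough to keep a nonempty window for $\tau$.
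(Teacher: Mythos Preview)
Your three-way decomposition along the dual, primal, and $\vz$ updates is exactly the paper's, and your treatment of the dual and $\vz$ brackets matches it verbatim. The divergence is entirely in the primal bracket $L_c(\vx^k,\vz^k;\vlambda^{k+1})-L_c(\vx^{k+1},\vz^k;\vlambda^{k+1})$, where the paper does \emph{not} split at $L_c(\hat\vx^{k+1},\vz^k;\vlambda^{k+1})$. Instead it uses a four-way telescoping through the surrogate values $g(\vx^k,\vw^k;\vlambda^{k+1})$, $g(\hat\vx^{k+1},\vw^k;\vlambda^{k+1})$, and the \emph{stochastic} surrogate $g(\vx^{k+1},\vw^k,\vxi^k;\vlambda^{k+1})$. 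The key trick you miss is the third of these four pieces: since $\hat\vx^{k+1}$ is $\mathcal{F}^k$-measurable, unbiasedness gives $\mathbb{E}[g(\hat\vx^{k+1},\vw^k;\vlambda^{k+1})]=\mathbb{E}[g(\hat\vx^{k+1},\vw^k,\vxi^k;\vlambda^{k+1})]$, and then optimality of $\vx^{k+1}$ for the stochastic surrogate makes this piece $\geq 0$ for free. All stochastic cost is then confined to the fourth piece $g(\vx^{k+1},\vw^k,\vxi^k;\vlambda^{k+1})-L_c(\vx^{k+1},\vz^k;\vlambda^{k+1})$, where the descent lemma plus a single Young split (weight $\gamma$) on $\langle e^k,\vx^{k+1}-\vs^k\rangle$ produce $-\tfrac{\|e^k\|^2}{2\gamma}-\tfrac{L}{2}\|\vx^{k+1}-\vs^k\|^2$, and the three-term bound $\|\vx^{k+1}-\vs^k\|^2\leq 3\|\vx^{k+1}-\hat\vx^{k+1}\|^2+3\|\hat\vx^{k+1}-\vx^k\|^2+3\|\vx^k-\vs^k\|^2$ is the sole source of every ``$3L$'' in \eqref{Lall}.

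Your two-way split forces you to bound $L_c(\hat\vx^{k+1})-L_c(\vx^{k+1})$ directly, and this is where your proposal becomes hand-wavy. The strong-convexity-plus-stochastic-optimality route does produce a subgradient of $L_c$ at $\vx^{k+1}$ equal to $[\nabla f(\vx^{k+1})-\nabla f(\vs^k)]-e^k-\gamma(\vx^{k+1}-\vs^k)-c\mB^\top\mB(\vx^{k+1}-\vx^k)$, but its inner product with $\hat\vx^{k+1}-\vx^{k+1}$ now carries \emph{several} terms that must each be Young-split, with non-noise factors of size $\gamma$, $L$, and $cd_{\max}$ multiplying $\|\hat\vx^{k+1}-\vx^{k+1}\|$; moreover $\langle e^k,\vx^{k+1}\rangle$ does not vanish in conditional expectation since $\vx^{k+1}$ depends on $\vxi^k$. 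After Lemma~\ref{lemma_xx} you do get a variance-order error, but not with the coefficients $\tfrac{1}{2\gamma}$ and $\tfrac{3L+4\tau}{2(\gamma+2c+\kappa)^2}$ claimed, and the extra $\gamma$- and $c$-weighted pieces eat into your $\hat\sigma_1$ budget. In short, your route can yield \emph{a} descent inequality of the right shape, but the claim to recover ``precisely'' the noise term and the exact $\hat\sigma_1,\hat\sigma_2$ of the lemma is not supported; routing through the stochastic surrogate as the paper does is what makes those constants fall out cleanly.
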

\begin{proof}
Firstly, according to \eqref{equ_2_L} and \eqref{va}, we have
\begin{align}
\label{Lv}
\!\!\!\!\mathbb{E}[L_c(\vx^k,\vz^k;\vlambda^k)-L_c(\vx^{k},\vz^k;\vlambda^{k+1})]= -\alpha \mathbb{E}[\|\mA \vx^k\|^2].
\end{align}
Secondly, we have
\begin{align}
\nonumber&\mathbb{E}[L_c(\vx^{k},\vz^k;\vlambda^{k+1})-L_c(\vx^{k+1},\vz^k;\vlambda^{k+1})]\\
\nonumber&= \mathbb{E}[L_c(\vx^{k},\vz^k;\vlambda^{k+1})-g(\vx^k,\vw^k;\vlambda^{k+1})]\\
\nonumber&~~~+\mathbb{E}[g(\vx^k,\vw^k\vlambda^{k+1})-g(\hat{\vx}^{k+1},\vw^k;\vlambda^{k+1})]\\
\nonumber&~~~+\mathbb{E}[g(\hat{\vx}^{k+1},\vw^k;\vlambda^{k+1})-g(\vx^{k+1},\vw^k,\vxi^k;\vlambda^{k+1})]\\
\label{Lx} &~~~+\mathbb{E}[g(\vx^{k+1},\vw^k,\vxi^k;\vlambda^{k+1})-L_c(\vx^{k+1},\vz^k;\vlambda^{k+1})].
\end{align}
Next, we bound each of the terms in the right hand side of \eqref{Lx}.
Based on the definition of function $g$ in \eqref{fun_g1}, we have
\begin{align}
\nonumber &\mathbb{E}[L_c(\vx^{k},\vz^k;\vlambda^{k+1})-g(\vx^k,\vw^k;\vlambda^{k+1})]\\
\nonumber &= \mathbb{E}[f(\vx^k)-f(\vs^k)-\langle \nabla f(\vs^k), \vx^k-\vs^k\rangle-\frac{\gamma}{2}\|\vx^k-\vs^k\|^2]\\
\label{Lcg}&\geq \frac{\mu-\gamma}{2}\mathbb{E}[\|\vx^k-\vs^k\|^2],
\end{align}
where the inequality comes from \eqref{lower} in Assumption \ref{assu}.
Using the strongly convexity of the objective function $g$ (with modulus $\kappa+2c+\gamma$) and the definition of $\hat{\vx}^{k+1}$ in \eqref{hatx}, we can obtain
\begin{align}
\nonumber & \mathbb{E}[g(\vx^k,\vw^k;\vlambda^{k+1})-g(\hat{\vx}^{k+1},\vw^k;\vlambda^{k+1})]\\
\label{Lx2} &\geq \frac{\kappa+2c+\gamma}{2}\mathbb{E}[\|\vx^k-\hat{\vx}^{k+1}\|^2].
\end{align}
In addition, we have
\begin{align}
\nonumber &\mathbb{E}[g(\hat{\vx}^{k+1},\vw^k;\vlambda^{k+1})-g(\vx^{k+1},\vw^k,\vxi^k;\vlambda^{k+1})]\\
\nonumber&=\mathbb{E}[g(\hat{\vx}^{k+1},\vw^k,\vxi^k;\vlambda^{k+1})-g(\vx^{k+1},\vw^k,\vxi^k;\vlambda^{k+1})]\\
\label{Lx22}&\geq 0,
\end{align}
where the first equality dues to  \eqref{unbiased} in Assumption \ref{assu3}, and the above inequality dues to $\vx^{k+1}$ is the optimal solution in \eqref{xa}. Lastly, we can bound
\begin{align}
\nonumber &\mathbb{E}[g(\vx^{k+1},\vw^k,\vxi^k;\vlambda^{k+1})-L_c(\vx^{k+1},\vz^k;\vlambda^{k+1})]\\
\nonumber &=\mathbb{E}[f(\vs^k)]+\frac{1}{|\mathcal{I}|} \sum_{j=1}^{|\mathcal{I}|}\mathbb{E}[\langle G(\vs^k,\vxi_j^k),\vx^{k+1}-\vs^k\rangle]\\
\nonumber &~~~+\mathbb{E}\left[\frac{\gamma}{2}\|\vx^{k+1}-\vs^k\|^2]+\frac{c}{2}\|\vx^{k+1}-\vx^k\|_{\mB^{\top}\mB}^2-f(\vx^{k+1})\right]\\
\nonumber &\geq \frac{1}{|\mathcal{I}|} \sum_{j=1}^{|\mathcal{I}|}\mathbb{E}[\langle G(\vs^k,\vxi_j^k)-\nabla f(\vs^k),\vx^{k+1}-\vs^k\rangle ]\\
\nonumber &~~~ +\frac{\gamma-L}{2}\mathbb{E}[\|\vx^{k+1}-\vs^k\|^2]+\frac{c}{2}\mathbb{E}[\|\vx^{k+1}-\vx^k\|_{\mB^{\top}\mB}^2]\\
 \label{Lx3}&\geq  -\frac{N\sigma^2}{2\gamma|\mathcal{I}|} -\frac{L}{2}\mathbb{E}[\|\vx^{k+1}-\vs^k\|^2],
\end{align}
where the first inequality is obtained by applying  the descent lemma \cite[Lemma1.2.3]{Nesterov2004}
\begin{align*}
 f(\vx^{k+1})\leq  f(\vs^k) + \langle \nabla f(\vs^k),\vx^{k+1}-\vs^k\rangle +\frac{L}{2}\|\vx^{k+1}-\vs^k\|^2
\end{align*} owing to gradient Lipschitz continuity in \eqref{Lip};  the second inequality holds by using the Young's inequality $a^{\top}b\geq -\frac{\|a\|^2}{2\gamma}-\frac{\gamma\|b\|^2}{2}$ and \eqref{variance} in Assumption \ref{assu3}. Using the convexity of the operator $\|\cdot\|^2$, we have
\begin{align}
\nonumber \|\vx^{k+1}-\vs^k\|^2\leq &3\|\vx^{k+1}-\hat{\vx}^{k+1}\|^2+3\|\hat{\vx}^{k+1}-\vx^{k}\|^2\\
 \label{abc}&+3\|\vx^{k}-\vs^k\|^2.
\end{align}
 Substituting \eqref{xhatx} and \eqref{abc} into \eqref{Lx3} gives rise to
\begin{align}
\nonumber &\mathbb{E}[g(\vx^{k+1},\vw^k,\vxi^k;\vlambda^{k+1})-L_c(\vx^{k+1},\vz^k;\vlambda^{k+1})]\\
\nonumber   &\geq  -\left(\frac{1}{2\gamma}+\frac{3L}{2(\gamma+2c+\kappa)^2}\right)\frac{N\sigma^2}{|\mathcal{I}|}\\
\label{Lx33}&~~~ -\frac{3L}{2}\mathbb{E}[\|\hat{\vx}^{k+1}-\vx^k\|^2]-\frac{3L}{2}\mathbb{E}[\|\vs^{k}-\vx^k\|^2].
\end{align}
By further substituting \eqref{Lcg}-\eqref{Lx22} and \eqref{Lx33} into \eqref{Lx},  we obtain
\begin{align}
\nonumber&\mathbb{E}[L_c(\vx^{k},\vz^k;\vlambda^{k+1})+\tau\|\vx^k-\vx^{k-1}\|^2\\
\nonumber&~~~-L_c(\vx^{k+1},\vz^k;\vlambda^{k+1})-\tau\|\vx^{k+1}-\vx^{k}\|^2]\\
\nonumber& \geq\frac{\mu-\gamma}{2}\mathbb{E}[\|\vx^k-\vs^k\|^2]+\frac{\kappa+2c+\gamma}{2}\mathbb{E}[\|\vx^k-\hat{\vx}^{k+1}\|^2]\\
\nonumber& ~~~-\frac{3L}{2}\mathbb{E}[\|\hat{\vx}^{k+1}-\vx^k\|^2]-\frac{3L}{2}\mathbb{E}[\|\vs^{k}-\vx^k\|^2]\\
\nonumber&~~~-\left(\frac{1}{2\gamma}+\frac{3L}{2(\gamma+2c+\kappa)^2}\right)\frac{\sigma^2}{|\mathcal{I}|}
+\tau\mathbb{E}[\|\vx^k-\vx^{k-1}\|^2]\\
\nonumber&~~~-2\tau\mathbb{E}[\|\vx^{k+1}-\hat{\vx}^{k+1}\|^2]-2\tau\mathbb{E}[\|\hat{\vx}^{k+1}-\vx^{k}\|^2]\\
\nonumber&=  \hat{\sigma}_1\mathbb{E}[\|\vx^k-\hat{\vx}^{k+1}\|^2]+\hat{\sigma}_2\mathbb{E}[\|\vx^k-\vx^{k-1}\|^2]\\
\label{Lx_new} &~~~ -\left(\frac{1}{2\gamma}+\frac{3L+4\tau}{2(\gamma+2c+\kappa)^2}\right)\frac{N\sigma^2}{|\mathcal{I}|},
\end{align}
where $\hat{\sigma}_1$ and $\hat{\sigma}_2$ are defined in \eqref{eqn: hat_sigma1} and \eqref{eqn: hat_sigma2}, respectively, and the equality is obtained by applying \eqref{sa}.

Thirdly, according to the definition of the $\vz$ update in \eqref{za}, we have
\begin{align}
\nonumber &\mathbb{E}[L_c(\vx^{k+1},\vz^k;\vlambda^{k+1})-L_c(\vx^{k+1},\vz^{k+1};\vlambda^{k+1})]\\
\label{Lz}&\geq
\frac{\kappa}{2\beta}(2-\beta)\mathbb{E}[\|\vz^k-\vz^{k+1}\|^2]\notag \\
&\geq \frac{\kappa}{2\beta}\mathbb{E}[\|\vz^k-\vz^{k+1}\|^2],
\end{align}
where the last inequality is due to $0<\beta<1$.
By combining \eqref{Lv}, \eqref{Lx_new} and \eqref{Lz}, we obtain  \eqref{Lall}. Besides, \eqref{eqn: hat_sigma1} and \eqref{eqn: hat_sigma2} implies \eqref{etak2}.
\end{proof}

\begin{lemma}\label{Lemma_potential}
	Under Assumptions \ref{assu}-\ref{assu3}, if $\kappa>-\mu$, $\gamma>3L$, $\eta_k$ is a constant satisfies the condition \eqref{etak}, and
	\begin{align}
    \label{alpha_q} 0<\alpha\leq \min\left\{\frac{\hat{\sigma}_1}{4 \sigma_A\sigma_1^2},\frac{\hat{\sigma}_2}{4\sigma_A^2\sigma_2^2\eta_k^2},c\right\},\\
	\label{equ_beta}0<\beta<\min\left\{\frac{\alpha}{12\kappa\sigma_5^2},\frac{\sigma_4}{36},1\right\},
	\end{align}
	where $\sigma_1$, $\sigma_2$, $\sigma_4$ and $\sigma_5$ are constants denoted in \eqref{eqn: sigma1} and \eqref{eqn: sigma2}, \eqref{eqn: sigma4} and \eqref{dual_lemma}, respectively. Then we have
\begin{align}
\nonumber&\mathbb{E}[\phi^k-\phi^{k+1}]\\
\nonumber&\geq\frac{\kappa(1-\beta)\beta}{4}\mathbb{E}[\|\hat{\vx}^{k+1}-\vz^k\|^2]+\frac{\alpha}{2}\mathbb{E}[
\|\mA\vx(\vz^k,\vlambda^{k+1})\|^2]\\
\label{phiphi4}&+\frac{\hat{\sigma}_1}{2}\mathbb{E}[\|\vx^k-\hat{\vx}^{k+1}\|^2]+\frac{\hat{\sigma}_2}{2}
\mathbb{E}[\|\vx^k-\vx^{k-1}\|^2]-\frac{C_1N\sigma^2}{|\mathcal{I}|},
\end{align}
where $\hat{\vx}^{k+1}$ and $\hat{\vz}^{k+1}$ are defined in \eqref{hatx} and \eqref{hatz}, and
	\begin{align}
	\label{C1} C_1=\left(\frac{1}{2\gamma}+\frac{6L+8\tau+\kappa(1-\beta)}{4(\gamma+2c+\kappa)^2}\right).
	\end{align}
\end{lemma}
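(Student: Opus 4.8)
The idea is to take the Lagrangian descent already established in Lemma~\ref{lemma1} and add to it the change of the ``duality--gap'' part $2P(\vz^{k})-2d(\vz^{k};\vlambda^{k})$ of the potential, showing that the undesirable terms produced by that part are dominated once $\alpha$ and $\beta$ are small. Concretely I would write
\begin{align*}
\mathbb{E}[\phi^{k}-\phi^{k+1}] =&~ \mathbb{E}[L_c(\vx^{k},\vz^{k};\vlambda^{k})+\tau\|\vx^{k}-\vx^{k-1}\|^2]\\
 &~ - \mathbb{E}[L_c(\vx^{k+1},\vz^{k+1};\vlambda^{k+1})+\tau\|\vx^{k+1}-\vx^{k}\|^2]\\
 &~ +2\mathbb{E}[P(\vz^{k})-P(\vz^{k+1})]\\
 &~ +2\mathbb{E}[d(\vz^{k+1};\vlambda^{k+1})-d(\vz^{k};\vlambda^{k})],
\end{align*}
bound the first two lines below by Lemma~\ref{lemma1}, and then lower bound the last two lines.

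For the last two lines I would use the envelope theorem for the value functions \eqref{dual}--\eqref{proximal_opt}, namely $\nabla P(\vz)=\kappa(\vz-\vx(\vz))$, $\nabla_{\vz}d(\vz;\vlambda)=\kappa(\vz-\vx(\vz;\vlambda))$ and $\nabla_{\vlambda}d(\vz;\vlambda)=\mA\vx(\vz;\vlambda)$, together with the facts that $P-\tfrac{\kappa}{2}\|\cdot\|^2$ and $d(\cdot;\vlambda)-\tfrac{\kappa}{2}\|\cdot\|^2$ are concave (so $\nabla P,\nabla_{\vz}d$ are Lipschitz, with moduli expressed through the argmin--map constants of Lemma~\ref{lemma_error_bound}) and that $d(\vz;\cdot)$ is concave. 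Splitting $d(\vz^{k+1};\vlambda^{k+1})-d(\vz^{k};\vlambda^{k})$ into a $\vz$--increment and a $\vlambda$--increment and using $\vlambda^{k+1}-\vlambda^{k}=\alpha\mA\vx^{k}$, $\vz^{k+1}-\vz^{k}=\beta(\vx^{k+1}-\vz^{k})$ and $\vx^{k}-\vs^{k}=-\eta_k(\vx^{k}-\vx^{k-1})$, the $\vlambda$--increment contributes (by concavity) at least $2\alpha\langle\mA\vx(\vz^{k};\vlambda^{k+1}),\mA\vx^{k}\rangle$; combined with the $-\alpha\mathbb{E}\|\mA\vx^{k}\|^2$ from Lemma~\ref{lemma1} via $2\langle a,b\rangle-\|b\|^2=\|a\|^2-\|a-b\|^2$ this produces a clean $\alpha\|\mA\vx(\vz^{k};\vlambda^{k+1})\|^2$ minus $\alpha\sigma_A^2\|\vx^{k}-\vx(\vz^{k};\vlambda^{k+1})\|^2$, and the primal error bound (Lemma~\ref{primal_error_bound}) splits the latter into multiples of $\|\vx^{k}-\hat{\vx}^{k+1}\|^2$ and $\eta_k^2\|\vx^{k}-\vx^{k-1}\|^2$, each absorbed by the $\hat\sigma_1,\hat\sigma_2$ terms of Lemma~\ref{lemma1} once $\alpha$ obeys \eqref{alpha_q}, leaving $\tfrac{\hat\sigma_1}{2},\tfrac{\hat\sigma_2}{2}$ and $\tfrac{\alpha}{2}$.

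For the $\vz$--increment of $d$ and the term $-2[P(\vz^{k+1})-P(\vz^{k})]$ I would use the weak--concavity \emph{upper} bound for $P$ and the Lipschitz--gradient \emph{lower} bound for $d(\cdot;\vlambda)$; the gradient cross--terms collapse to $2\kappa\beta\langle\vx(\vz^{k})-\vx(\vz^{k};\vlambda^{k+1}),\vx^{k+1}-\vz^{k}\rangle$, which the dual error bound (Lemma~\ref{dual_error_bound}) controls by $2\kappa\beta\sigma_5\|\mA\vx(\vz^{k};\vlambda^{k+1})\|\,\|\vx^{k+1}-\vz^{k}\|$. Young's inequality splits this against the remaining $\tfrac{\alpha}{2}\|\mA\vx(\vz^{k};\vlambda^{k+1})\|^2$ budget and against the $\tfrac{\kappa}{2\beta}\|\vz^{k}-\vz^{k+1}\|^2=\tfrac{\kappa\beta}{2}\|\vx^{k+1}-\vz^{k}\|^2$ from Lemma~\ref{lemma1}, with $\beta\le\alpha/(12\kappa\sigma_5^2)$ keeping the $\|\mA\vx(\vz^{k};\vlambda^{k+1})\|^2$ coefficient at $\ge\tfrac{\alpha}{2}$. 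The net coefficient of $\|\vx^{k+1}-\vz^{k}\|^2$ is $\tfrac{\kappa\beta}{2}$ minus the $O(\beta^2)$ curvature losses of the two descent steps (handled through $\beta\le\sigma_4/36$) and the $O(\beta^2)$ Young remainder, which by \eqref{equ_beta} stays $\ge\tfrac{\kappa(1-\beta)\beta}{2}$; finally writing $\|\vx^{k+1}-\vz^{k}\|^2\ge\tfrac12\|\hat{\vx}^{k+1}-\vz^{k}\|^2-\|\vx^{k+1}-\hat{\vx}^{k+1}\|^2$, taking expectations and invoking Lemma~\ref{lemma_xx} turns $\mathbb{E}\|\vx^{k+1}-\hat{\vx}^{k+1}\|^2$ into the extra piece $\tfrac{\kappa(1-\beta)}{4(\gamma+2c+\kappa)^2}\tfrac{N\sigma^2}{|\mathcal{I}|}$, which added to the variance term of Lemma~\ref{lemma1} is exactly $C_1$ in \eqref{C1}, leaving $\tfrac{\kappa(1-\beta)\beta}{4}\|\hat{\vx}^{k+1}-\vz^{k}\|^2$ and hence \eqref{phiphi4}.

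The main obstacle is this last balancing act: the $\vz$--block mixes an \emph{upper} bound on $P$, a \emph{lower} bound on $d(\cdot;\vlambda)$, and the dual error bound of Lemma~\ref{dual_error_bound}, and one must simultaneously keep the $\|\mA\vx(\vz^{k};\vlambda^{k+1})\|^2$ coefficient at $\ge\alpha/2$, keep the $\|\vx^{k+1}-\vz^{k}\|^2$ coefficient positive, and prevent the Lipschitz moduli of $\nabla P,\nabla_{\vz}d$ (which blow up as $\kappa+\mu\to0^{+}$, cf.\ $\sigma_3,\sigma_4$ in Lemma~\ref{lemma_error_bound}) from swamping the $O(\beta^{2})$ remainders --- this is precisely what forces the explicit thresholds \eqref{alpha_q}--\eqref{equ_beta}. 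Everything else is routine Young/Cauchy--Schwarz bookkeeping and the tower property for the conditional expectations.
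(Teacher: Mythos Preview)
Your proposal is correct and follows essentially the same route as the paper: decompose $\phi^{k}-\phi^{k+1}$, invoke Lemma~\ref{lemma1} for the Lagrangian block, use Danskin/envelope for $P$ and $d$, complete the square on the $\vlambda$-increment to produce $\alpha\|\mA\vx(\vz^{k};\vlambda^{k+1})\|^{2}-\alpha\|\mA(\vx^{k}-\vx(\vz^{k};\vlambda^{k+1}))\|^{2}$, absorb the latter via Lemma~\ref{primal_error_bound}, control the $\vz$-block cross term through Lemmas~\ref{lemma_error_bound} and~\ref{dual_error_bound} plus Young, and finally pass from $\vx^{k+1}$ to $\hat\vx^{k+1}$ with Lemma~\ref{lemma_xx}. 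The only cosmetic difference is that the paper bounds the $\vz$-increment of $d$ by the exact minimizer inequality (so the cross term first carries $\vx(\vz^{k+1};\vlambda^{k+1})$ and is then split into a $\sigma_5$-piece and a $\sigma_4$-piece), whereas you linearize $d(\cdot;\vlambda^{k+1})$ at $\vz^{k}$ and push the $\vx(\vz^{k+1};\vlambda^{k+1})$--$\vx(\vz^{k};\vlambda^{k+1})$ discrepancy into the Lipschitz remainder; both routes use the same $\sigma_4$ control and lead to the same thresholds \eqref{alpha_q}--\eqref{equ_beta}.
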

\begin{proof}
From the definition of $d(\vz;\vlambda)$ in \eqref{dual},
we have
\begin{align}
\nonumber&\mathbb{E}[d(\vz^{k};\vlambda^{k+1})-d(\vz^{k};\vlambda^{k})]\\
\nonumber &=\mathbb{E}[L_c(\vx(\vz^{k};\vlambda^{k+1}),\vz^{k};\vlambda^{k+1})
-L_c(\vx(\vz^{k};\vlambda^{k}),\vz^{k};\vlambda^{k})]\\
\nonumber &\geq \mathbb{E}[L_c(\vx(\vz^{k};\vlambda^{k+1}),\vz^{k};\vlambda^{k+1})
-L_c(\vx(\vz^{k};\vlambda^{k+1}),\vz^{k};\vlambda^{k})]\\
\nonumber&=\alpha \mathbb{E}[\langle \mA\vx^k,\mA\vx(\vz^k,\vlambda^{k+1})\rangle],
\end{align}
where the inequality is due to $\vx(\vz^{k};\vlambda^{k})=\arg\min_{\vx}L_c(\vx,\vz^{k};\vlambda^{k})$  and the second equality comes from the iterates in \eqref{va}. Using a similar technique, we have
\begin{align}
\nonumber&\mathbb{E}[d(\vz^{k+1};\vlambda^{k+1})-d(\vz^{k};\vlambda^{k+1})]\\
\nonumber &\geq\frac{\kappa}{2}\mathbb{E}[(\vz^{k+1}-\vz^k)^{\top}(\vz^{k+1}+\vz^k-2\vx(\vz^{k+1},\vlambda^{k+1}))].
\end{align}
Combing the above two inequalities, we know
\begin{align}
\nonumber&\mathbb{E}[d(\vz^{k+1};\vlambda^{k+1})-d(\vz^{k};\vlambda^{k})]\\
\label{dual_mi}&\geq \alpha \mathbb{E}[\langle \mA\vx^k,\mA\vx(\vz^k,\vlambda^{k+1})\rangle]\\
\nonumber&~~~+\frac{\kappa}{2}\mathbb{E}[(\vz^{k+1}-\vz^k)^{\top}(\vz^{k+1}+\vz^k-2\vx(\vz^{k+1},\vlambda^{k+1}))].
\end{align}

Based on  Danskin's theorem \cite[Proposition B.25]{Bertsekas99} in convex analysis and $P(\vz)$ defined in \eqref{proximal} with $\kappa > -\mu$, we can have
$$
\nabla P(\vz^k)=\kappa(\vz^k-\vx(\vz^k)).
$$
Thus, it shows
\begin{align}
\nonumber&\|\nabla P(\vz^k)- \nabla P(\vz^{k+1})\|\\
\nonumber&\leq\kappa\|\vz^k-\vz^{k+1}\|+\kappa\|\vx(\vz^{k+1})-\vx(\vz^k)\|\\
\nonumber& \leq\kappa\tilde{\sigma}_4\|\vz^{k+1}-\vz^k\|,
\end{align}
where $\tilde{\sigma}_4=1+\sigma_4^{-1}$ and the final inequality is  due to Lemma \ref{lemma_error_bound}. The above inequality  shows the gradient of $P(\vz^k)$ is Lipschitz continuous, which therefore it satisfies the descent lemma
\begin{align}
 \label{proximal_mi}&\mathbb{E}[P(\vz^{k+1})-P(\vz^{k})]\\
\nonumber&\leq \mathbb{E}[\kappa(\vz^{k+1}-\vz^k)^{\top}(\vz^k-\vx(\vz^k))]
+\frac{\kappa\tilde{\sigma}_4}{2}\mathbb{E}[\|\vz^{k+1}-\vz^k\|^2].
\end{align}

By combining \eqref{dual_mi}, \eqref{proximal_mi} and \eqref{Lall}, we obtain
\begin{align}
\nonumber&\mathbb{E}[\phi^k-\phi^{k+1}]\\
\nonumber&\geq-\alpha \mathbb{E}[\|\mA \vx^k\|^2]+\frac{\kappa}{2\beta}\mathbb{E}[\|\vz^k-\vz^{k+1}\|^2]\\
\nonumber&~~~+ \hat{\sigma}_1\mathbb{E}[\|\vx^k-\hat{\vx}^{k+1}\|^2]-2\mathbb{E}[\kappa(\vz^{k+1}-\vz^k)^{\top}(\vz^k-\vx(\vz^k))]\\
\nonumber &~~~ +\hat{\sigma}_2\mathbb{E}[\|\vx^k-\vx^{k-1}\|^2]-\left(\frac{1}{2\gamma}+\frac{3L+4\tau}{2(\gamma+2c+\kappa)^2}\right)\frac{N\sigma^2}{|\mathcal{I}|}\\
\nonumber&~~~-\kappa\tilde{\sigma}_4\mathbb{E}[\|\vz^{k+1}-\vz^k\|^2]+2\alpha \mathbb{E}[\langle \mA\vx^k,\mA\vx(\vz^k,\vlambda^{k+1})\rangle]\\
\nonumber&~~~+\kappa\mathbb{E}[(\vz^{k+1}-\vz^k)^{\top}(\vz^{k+1}+\vz^k-2\vx(\vz^{k+1},\vlambda^{k+1}))]\\
\nonumber&=\alpha\mathbb{E}[\|\mA\vx(\vz^k,\vlambda^{k+1})\|^2]
-\alpha\mathbb{E}[\|\mA(\vx^k-\vx(\vz^k,\vlambda^{k+1}))\|^2] \\
\nonumber&~~~+\hat{\sigma}_1\mathbb{E}[\|\vx^k-\hat{\vx}^{k+1}\|^2] +(\frac{\kappa}{2\beta}+\kappa-\kappa\tilde{\sigma}_4)\mathbb{E}[\|\vz^{k+1}-\vz^k\|^2]\\
\nonumber&~~~ +2\kappa\mathbb{E}[(\vz^{k+1}-\vz^k)^{\top}(\vx(\vz^k)-\vx(\vz^{k+1};\vlambda^{k+1}))]\\
\label{phiphi}&~~~+\hat{\sigma}_2\mathbb{E}[\|\vx^k-\vx^{k-1}\|^2]-\left(\frac{1}{2\gamma}+\frac{3L+4\tau}{2(\gamma+2c+\kappa)^2}\right)\frac{N\sigma^2}{|\mathcal{I}|},
\end{align}
 where  the equality comes from  completing the square
\begin{align}
 &\mathbb{E}[\|\mA(\vx^k-\vx(\vz^k,\vlambda^{k+1}))\|^2] \notag \\
 &=  \mathbb{E}[\|\mA \vx^k\|^2-2\langle\mA \vx^k,\mA vx(\vz^k,\vlambda^{k+1})+\|\mA\vx(\vz^k,\vlambda^{k+1})\|^2\rangle].\notag
\end{align}

We  further bound the right-hand-side terms of \eqref{phiphi}.
By using the Young's inequality,  we have
\begin{align}
\nonumber&2(\vz^{k+1}-\vz^k)^{\top}(\vx(\vz^k)-\vx(\vz^k;\vlambda^{k+1}))\\
\nonumber&\geq -\frac{\|\vz^{k+1}-\vz^k\|^2}{6\beta}-6\beta\|\vx(\vz^k)-\vx(\vz^k;\vlambda^{k+1})\|^2\\
\label{Yongs}&\geq -\frac{\|\vz^{k+1}-\vz^k\|^2}{6\beta}-6\beta\sigma_5^2\|\mA\vx(\vz;\vlambda)\|^2,
\end{align}
where the lase inequality dues to \eqref{dual_lemma} in Lemma \ref{dual_error_bound}.
Besides, using the error bound \eqref{z_lemma} in Lemma \ref{lemma_error_bound}, we have
\begin{align}
\nonumber&(\vz^{k+1}-\vz^k)^{\top}(\vx(\vz^k;\vlambda^{k+1})-\vx(\vz^{k+1};\vlambda^{k+1}))\\
\nonumber&\geq -\|\vz^{k+1}-\vz^k\|\|\vx(\vz^k;\vlambda^{k+1})-\vx(\vz^{k+1};\vlambda^{k+1})\|\\
\label{vzz}&\geq -\frac{1}{\sigma_4}\|\vz^{k+1}-\vz^k\|^2.
\end{align}
Also, based on the error bound \eqref{primal_error} in Lemma \ref{primal_error_bound}, we obtain
\begin{align}
\nonumber &\|\mA(\vx^k-\vx(\vz^k,\vlambda^{k+1}))\|^2\\
\label{vxx}&\leq 2\sigma_A^2\sigma_1^2\|\vx^k-\hat{\vx}^{k+1}\|^2+2\sigma_A^2\sigma_2^2\|\vx^k-\vs^k\|^2.
\end{align}

By substituting \eqref{Yongs}, \eqref{vzz} and \eqref{vxx} into \eqref{phiphi}, we therefore obtain
\begin{align}
\nonumber&\mathbb{E}[\phi^k-\phi^{k+1}]\\
\nonumber&\geq (\alpha-6\kappa\beta\sigma_5^2) \mathbb{E}[\|\mA\vx(\vz^k,\vlambda^{k+1})\|^2]\\
\nonumber&~~~+(\frac{\kappa}{2\beta}+\kappa-\kappa\tilde{\sigma}_5-\frac{\kappa}{6\beta}-\frac{2\kappa}{\sigma_4})\mathbb{E}[\|\vz^{k+1}-\vz^k\|^2]\\
\nonumber&~~~+\left(\hat{\sigma}_1-2\alpha\sigma_A^2\sigma_1^2\right)\mathbb{E}[\|\vx^k-\hat{\vx}^{k+1}\|^2]\\
\nonumber&~~~+(\hat{\sigma}_2-2\alpha\sigma_A^2\sigma_2^2\eta_k^2)\mathbb{E}[\|\vx^k-\vx^{k-1}\|^2]\\
\label{phiphi1}&~~~ -\left(\frac{1}{2\gamma}+\frac{3L+4\tau}{2(\gamma+2c+\kappa)^2}\right)\frac{N\sigma^2}{|\mathcal{I}|}.
\end{align}
From \eqref{equ_beta}, we know $\beta<\frac{\sigma_4}{36}$. By recalling $\tilde{\sigma}_4=1+\sigma_4^{-1}$, we have
$$
\frac{\kappa}{2\beta}+\kappa-\kappa\tilde{\sigma}_4-\frac{\kappa}{6\beta}-\frac{2\kappa}{\sigma_4}\geq \frac{\kappa}{4\beta}.
$$
As $\beta<\frac{\alpha}{12\kappa\sigma_5^2}$ by \eqref{equ_beta}, we have
$$
\alpha-6\kappa\beta\sigma_5^2\geq \frac{\alpha}{2}.
$$
Similarly, based on \eqref{alpha_q}, we have
\begin{align*}
\hat{\sigma}_1-2\alpha\sigma_A^2\sigma_1^2\geq \frac{\hat{\sigma}_1}{2},~
 \hat{\sigma}_2-2\alpha\sigma_A^2\sigma_2^2\eta_k^2\geq \frac{\hat{\sigma}_2}{2}.
\end{align*}
Thus, it follows from \eqref{phiphi1} that
\begin{align}
\nonumber&\mathbb{E}[\phi^k-\phi^{k+1}]\\
\nonumber&\geq\frac{\kappa}{4\beta}\mathbb{E}[\|\vz^{k+1}-\vz^k\|^2]+\frac{\alpha}{2}\mathbb{E}[\|\mA\vx(\vz^k,\vlambda^{k+1})\|^2]\\
\nonumber&~~~+\frac{\hat \sigma_1}{2}\mathbb{E}[\|\vx^k-\hat{\vx}^{k+1}\|^2]+\frac{\hat \sigma_2}{2}\mathbb{E}[\|\vx^k-\vx^{k-1}\|^2]\\
\label{phiphi2}&~~~ -\left(\frac{1}{2\gamma}+\frac{3L+4\tau}{2(\gamma+2c+\kappa)^2}\right)\frac{N\sigma^2}{|\mathcal{I}|}.
\end{align}
Note that by using the definition of $\hat{\vz}^{k+1}$ in \eqref{hatz} and by \eqref{za}, we have
\begin{align}
  \hat \vz^{k+1}=  \vz^{k+1} + \beta ( \hat \vx^{k+1} -  \vx^{k+1}).
\end{align}
Thus, we can bound $\mathbb{E}[\|\vz^{k+1}-\vz^k\|^2]$ as
\begin{align*}
&\mathbb{E}[\|\vz^{k+1}-\vz^k\|^2]\\
&\geq(1-\frac{1}{\beta})\mathbb{E}[\|\vz^{k+1}-\hat{\vz}^{k+1}\|^2]+(1-\beta)\mathbb{E}[\|\hat{\vz}^{k+1}-\vz^k\|^2]\\
& =\beta(\beta-1)\mathbb{E}[\|\vx^{k+1}-\hat{\vx}^{k+1}\|^2]+(1-\beta)\mathbb{E}[\|\hat{\vz}^{k+1}-\vz^k\|^2]\\
&\geq \frac{\beta(\beta-1)}{(\gamma+2c+\kappa)^2}\frac{N\sigma^2}{|\mathcal{I}|}+(1-\beta)\beta^2\mathbb{E}
[\|\hat{\vx}^{k+1}-\vz^k\|^2],
\end{align*}
where the last inequality comes from  \eqref{xhatx} and \eqref{hatz}.
By substituting the above inequality  \eqref{phiphi2},  we obtain \eqref{phiphi4}.
\end{proof}
\subsection{Proof of Theorem \ref{The_conver}}
We are ready to prove Theorem \ref{The_conver}.
By summing \eqref{phiphi4} for $k=0,1,\ldots,K-1$, we obtain
\begin{align}
\nonumber&\mathbb{E}[\phi^0-\phi^{K}]\\
\nonumber&\geq  \frac{\kappa(1-\beta)\beta}{4}\sum_{k=0}^{K-1}\mathbb{E}[\|\hat{\vx}^{k+1}-\vz^k\|^2]-K\frac{C_1N\sigma^2}{|\mathcal{I}|}\\
\nonumber&~~~+\frac{\alpha}{2}\sum_{k=0}^{K-1}\mathbb{E}[\|\mA\vx(\vz^k,\vlambda^{k+1})\|^2]+\frac{\hat{\sigma}_1}{2}\sum_{k=0}^{K-1}
\mathbb{E}[\|\vx^k-\hat{\vx}^{k+1}\|^2]\\
 \label{phi0K}&~~~ +\frac{\hat{\sigma}_2}{2}\sum_{k=0}^{K-1}\mathbb{E}[\|\vx^k-\vx^{k-1}\|^2].
\end{align}

Recall the definition of $Q(\vx,\vlambda)$ in \eqref{QQ}
\begin{align}\label{QQ2}
\!\!\!Q(\vx,\vlambda)=&\|\vx-\textmd{prox}_{r}^1(\vx-\nabla f(\vx)-\mA^{\top}\vlambda)\|^2
+\|\mA\vx\|^2.
\end{align} To obtain the desired result, we first consider
\begin{align}
\nonumber&\mathbb{E}[\|\vx^k-\textmd{prox}_{r}^1(\vx^k-\nabla_{\vx}f(\vx^k)-\mA^{\top}\vlambda^{k+1})\|^2]\\
\label{gap1}&\leq 2\mathbb{E}[\|\vx^k-\hat{\vx}^{k+1}\|^2]\\
\nonumber&~~~+2\mathbb{E}[\|\hat{\vx}^{k+1}-\textmd{prox}_{r}^1(\vx^k-\nabla_{\vx}f(\vx^k)-\mA^{\top}\vlambda^{k+1})\|^2]
\end{align}
where the inequality dues to $\|a+b\|^2\leq2\|a\|^2+2\|b\|^2$. Notice
\begin{align}
\nonumber &\mathbb{E}[\|\hat{\vx}^{k+1}-\textmd{prox}_{r}^1(\vx^k-\nabla_{\vx}f(\vx^k)-\mA^{\top}\vlambda^{k+1})\|^2]\\
\nonumber&=\mathbb{E}[\|\textmd{prox}_{r}^1(\hat{\vx}^{k+1}-\nabla_{\vx} g(\hat{\vx}^{k+1},\vw^k;\vlambda^{k+1}))\\
\nonumber&~~~-\textmd{prox}_{r}^1(\vx^k-\nabla_{\vx}f(\vx^k)-\mA^{\top}\vlambda^{k+1})\|^2]\\
\nonumber &\leq \mathbb{E}[\|\hat{\vx}^{k+1}-\vx^k-\nabla_{\vx} g(\hat{\vx}^{k+1},\vw^k;\vlambda^{k+1})\\
\nonumber&~~~+\nabla_{\vx}f(\vx^k)+\mA^{\top}\vlambda^{k+1}\|^2]\\
\nonumber&\leq 2\mathbb{E}[\|\hat{\vx}^{k+1}-\vx^k\|^2]\\
\nonumber&~~~+2\mathbb{E}[\|\nabla_{\vx} g(\hat{\vx}^{k+1},\vw^k;\vlambda^{k+1})-\nabla_{\vx}f(\vx^k)-\mA^{\top}\vlambda^{k+1}\|^2],\\
\nonumber&=2\mathbb{E}[\|\hat{\vx}^{k+1}-\vx^k\|^2]+2\mathbb{E}[\|\nabla_{\vx}f(\vs^k)-\nabla_{\vx}f(\vx^k)\\
\nonumber&~~~+\gamma(\hat{\vx}^{k+1}-\vs^k)+c\mD(\hat{\vx}^{k+1}-\vx^k)+c\mA^T\mA\vx^k\\
\nonumber&~~~+\kappa(\hat{\vx}^{k+1}-\vz^k)\|^2]\\
\nonumber&\leq 2\mathbb{E}[\|\hat{\vx}^{k+1}-\vx^k\|^2]+10\mathbb{E}[\|\nabla_{\vx}f(\vs^k)-\nabla_{\vx}f(\vx^k)\|^2\\
\nonumber&~~~+\|\gamma(\hat{\vx}^{k+1}-\vs^k)\|^2+\|c\mD(\hat{\vx}^{k+1}-\vx^k)\|^2+\|c\mA^T\mA\vx^k\|^2\\
\nonumber&~~~+\|\kappa(\hat{\vx}^{k+1}-\vz^k)\|^2]\\
\nonumber&\leq (2+10c^2d_{max}^2+20\gamma^2)\mathbb{E}[\|\hat{\vx}^{k+1}-\vx^k\|^2]+10c^2\sigma_A^2\mathbb{E}[\|\mA\vx^k\|^2]\\
\nonumber&~~~+(10L^2+20\gamma^2)\mathbb{E}[\|\vx^k-\vs^k\|^2]+10\kappa^2\mathbb{E}[\|\hat{\vx}^{k+1}-\vz^k\|^2],
\end{align}
 where $d_{\max}$ is the largest value of matrix $\mD$, the first equality is due to the optimal condition for
\eqref{hatx}, i.e., $\hat{\vx}^{k+1}=\textmd{prox}_{r}^1(\hat{\vx}^{k+1}-\nabla_{\vx} g(\hat{\vx}^{k+1},\vw^k;\vlambda^{k+1})$; the first inequality is owing to the nonexpansive property
of the proximal operator; the second inequality dues to $\|a+b\|^2\leq2\|a\|^2+2\|b\|^2$; the second equality is obtained by the definition of function $g$ in \eqref{fun_g1}; 
the last inequality dues to the $L$-smooth in \eqref{Lip}.

 Next, we show the upper bound for $\|\mA\vx^k\|$ as
\begin{align}
\nonumber&\mathbb{E}[\|\mA\vx^k\|^2]\\
\nonumber&\leq 2\mathbb{E}[\|\mA\vx(\vz^k,\vlambda^{k+1})\|^2]+2\sigma_A^2\mathbb{E}[\|\vx^k-\vx(\vz^k,\vlambda^{k+1})\|^2]\\
\nonumber&\leq 2\mathbb{E}[\|\mA\vx(\vz^k,\vlambda^{k+1})\|^2]+4\sigma_A^2\sigma_1^2\mathbb{E}[\|\vx^k-\hat{\vx}^{k+1}\|^2]\\
\label{gap2}&~~~+4\sigma_A^2{\sigma_2^2}\mathbb{E}[\|\vx^k-\vs^k\|^2],
\end{align}
where the last inequality comes from Lemma \ref{primal_error_bound}.
Now, we consider the upper bound of \eqref{QQ2}. Using the above inequalities \eqref{gap1}-\eqref{gap2}, we can obtain
\begin{align}
\nonumber & \min_{k=0,\ldots,K-1}\mathbb{E}[Q(\vx^{k},\vlambda^{k+1})]\\
\nonumber&\leq\frac{1}{K}\sum_{k=0}^{K-1}\mathbb{E}[\|\vx^k-\textmd{prox}_{r}^1(\vx^k-\nabla_{\vx}f(\vx^k)-\mA^{\top}\vlambda^{k+1})\|^2]\\
\nonumber&~~~+\frac{1}{K}\sum_{k=0}^{K-1}\mathbb{E}[\|\mA\vx^k\|^2]\\
\nonumber&\leq \frac{K_1}{K}\sum_{k=0}^{K-1}\mathbb{E}[\|\vx^k-\hat{\vx}^{k+1}\|^2+\frac{K_2}{K}\sum_{k=0}^{K-1}\|\vx^k-\vx^{k-1}\|^2]\\
\nonumber&~~~+\frac{K_3}{K}\sum_{k=0}^{K-1}\mathbb{E}[\|\hat{\vx}^{k+1}-\vz^k\|^2]+\frac{K_4}{K}\sum_{k=0}^{K-1}\mathbb{E}[\|\mA\vx(\vz^k,\vlambda^{k+1})\|^2].
\end{align}
where
\begin{align}
\nonumber K_1 &= 6+40\gamma+20c^2d_{\max}^2+4(20c^2\sigma_A^2+1)\sigma_A^2\sigma_1^2,\\
\nonumber K_2&= (20L^2+20\gamma^2) \bar{\eta}^2+4(20c^2\sigma_A^2+1)\sigma_A^2\sigma_2^2  \bar{\eta}^2,\\
\nonumber K_3&=20\kappa^2,~
\nonumber K_4=2(20c^2\sigma_A^2+1).
\end{align}
Further applying \eqref{phi0K}, we have
\begin{align}
\nonumber  \min_{k=0,\ldots,K-1}\mathbb{E}[Q(\vx^{k},\vlambda^{k+1})]& \leq C_0\left(\frac{\mathbb{E}[\phi^0-\phi^{K}]}{K}+\frac{C_1N\sigma^2}{|\mathcal{I}|}\right)\\
\nonumber & \leq C_0\left(\frac{\phi^0-\underline{f}}{K}+\frac{C_1N\sigma^2}{|\mathcal{I}|}\right),
\end{align}
where $\underline{f}$ is the lower bound of $\phi$ and $C_0$ is defined  as follows,
\begin{align}
C_0&\triangleq\frac{2K_1}{\hat{\sigma}_1}+\frac{K_2}{ \hat{\sigma}_2}
+\frac{4K_3\beta}{\kappa(1-\beta)}+\frac{2K_4}{\alpha}, \label{C0}
\end{align}

\section{Proof of Theorem \ref{thm PPDM}}\label{appendix thm2}
\begin{proof}
If we know the full gradient $\nabla f(\vx^k)$, i.e., $G(\vx^k,\vxi^k)=\nabla f(\vx^k)$, then $\sigma^2=0$.
Substituting it into \eqref{phiphi4}, we have
\begin{align}
\nonumber&\phi^k-\phi^{k+1}\\
\nonumber&\geq \frac{\kappa(1-\beta)\beta}{4}\|\hat{\vx}^{k+1}-\vz^k\|^2+\frac{\alpha}{2}\|\mA\vx(\vz^k,\vlambda^{k+1})\|^2\\
\nonumber&~~~+\frac{\hat{\sigma}_1}{2}\|\vx^k-\hat{\vx}^{k+1}\|^2+\frac{\hat{\sigma}_2}{2}\|\vx^k-\vx^{k-1}\|^2\geq0.
\end{align}
Thus $\phi^k$ is monotonically decreasing and it has lower bound $\underline{f}$. This implies that
\begin{align*}
 \max\{&\|\vx^k-\hat{\vx}^{k+1}\|,\|\vz^k-\hat{\vx}^{k+1}\|,\|\mA\vx(\vz^k;\vlambda^{k+1})\|\}\rightarrow 0.
\end{align*}
Thus, according to \cite[Theorem 2.4]{zhang2020}, every limit point generated by  PPDM algorithm is  a KKT point of  problem \eqref{eqn: dec_compact}.
In addition, substituting $\sigma^2=0$ into \eqref{QQQ1} and picking $K\geq \frac{C_0(\phi^0-\underline{f})}{\epsilon}$, we have
\begin{align}
\nonumber \min_{k=0,\ldots,K-1} Q(\vx^{k},\vlambda^{k+1}) \leq C_0\left(\frac{\phi^0-\underline{f}}{K}\right)\leq \epsilon.
\end{align}
Therefore, the proof is completed.
\end{proof}


%




\end{document}